\documentclass[a4paper,english,reqno]{amsart}

\usepackage{amssymb,amsthm, amsmath,amsfonts}
\usepackage{bbm}
\usepackage{graphicx}
\graphicspath{{figs/}}
\usepackage{subfigure}
\usepackage{microtype}
\usepackage{booktabs}
\usepackage{enumerate}
\usepackage{pgf}
\usepackage[latin1]{inputenc}
\usepackage{verbatim}
\usepackage{cite}
\usepackage{url}
\usepackage{xcolor}

\usepackage[cmtip,all]{xy}
\xyoption{arc}

\usepackage{pgf,tikz}
\usetikzlibrary{arrows,shapes,calc,positioning}
\usepackage{manfnt}
\usepackage{tikz-cd}
\usepackage{url}
\usepackage{microtype}

\usepackage[font=footnotesize]{caption}

\usepackage{xcolor}
\usepackage{hyperref}

\newcommand{\R}{\mathbb{R}}

\renewcommand{\H}{\mathbb{H}}

\newcommand{\N}{\mathbb{N}} 

\newtheorem{theorem}{Theorem}
\newtheorem{lemma}{Lemma}
\newtheorem{proposition}{Proposition}

\newtheorem{property}{Property} 
\newtheorem*{theoremA*}{Main Theorem}
\newtheorem*{theoremB*}{Main Theorem for a Flute}

\theoremstyle{definition}
\newtheorem{definition}[theorem]{Definition}

\newtheorem{remark}{Remark}

\numberwithin{equation}{section}

\begin{document}
	
	\title[Bellis strong stable sets on 
	infinite hyperbolic surfaces]%
	{Bellis strong stable sets on 
		infinite hyperbolic surfaces}
	
	
	%
	

	
	\author[S. Burniol, F. Dal'Bo, S. Herrero]{ Sergi Burniol Clotet, Françoise Dal'Bo, Sergio Herrero Vila}	
	\address{IMERL, CMAT, Universidad de la República, IRL-CNRS-IFUMI 2030, Uruguay}
	\email{sergi.burniol@gmail.com}
	\address{IRMAR, Universit\'e de Rennes, France, IRL-CNRS-IFUMI 2030, Uruguay}
	\email{francoise.dalbo@univ-rennes.fr}
	\address{IRMAR, Universit\'e de Rennes, Rennes, France}
	\email{sergioherrerovila@outlook.com}
	\subjclass[2020]{Primary 37D40; Secondary 53D25, 53C22.}
	
	\date{01/04/2026}

	\dedicatory{}
	
	\begin{abstract}		
		We provide a corrected proof of a theorem of A. Bellis on strong stable sets in the unit tangent bundle of certain hyperbolic surfaces. The theorem states that, for vectors whose geodesic rays encounter arbitrarily short closed geodesics, the strong stable set in the dynamical sense does not coincide with the associated horocyclic orbit. The proof is based on Bellis' idea of constructing geodesic rays that wind around infinitely many closed geodesics.
	\end{abstract}
	
	\maketitle
	
	%
	\section{Introduction}
	The purpose of this paper is to give a corrected proof of a theorem stated in the PhD thesis of A. Bellis \cite[Théorème E]{Bellis} on strong stable sets of the geodesic flow $g_t$ on the unit tangent bundle $T^1 S$ of a hyperbolic surface $S$. We consider the Sasaki distance $d_{Sa}$ on $T^1 S$ induced by the hyperbolic metric of $S$. The strong stable set $W^{ss}u$ of a vector $u\in T^1S$ is defined as 
	\[
	W^{ss}u = \{ v\in T^1S \, | \, \lim\limits_{t\to +\infty} d_{Sa}(g_tu,g_tv)=0  \}.
	\]
	Let $h_t$ denote the horocyclic flow on $T^1 S$.
	
	The horocyclic orbit $h_{\mathbb{R}} u$ of any vector $u\in T^1S$ is always contained in $W^{ss} u$ \cite{Dalbo}. Bellis proves that if the infimum of the injectivity radius along the geodesic ray $u(\mathbb{R}^+)$ is positive, then $h_{\mathbb{R}} u = W^{ss} u$ \cite[Théorème D]{Bellis}. When $S$ contains arbitrarily small closed geodesics --in particular, $S$ is of infinite type--, Bellis detects vectors $u$ for which $h_{\mathbb{R}} u$ and  $W^{ss} u$ are different.
	
	\begin{theoremA*}\label{thE} \cite[Théorème E]{Bellis}
		Let $S$ be a hyperbolic surface and $u\in T^1S$. If the geodesic ray $u(\mathbb{R}^+)$ meets an infinite sequence of closed geodesics whose lengths tend to zero, then
		\begin{enumerate}
			\item $h_{\mathbb{R}} u\subsetneq W^{ss}u$,
			\item the set $W^{ss}u $ is an uncountable union of horocyclic orbits $h_{\mathbb{R}} v_i$.
		\end{enumerate} 
	\end{theoremA*}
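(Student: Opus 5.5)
Lift to the universal cover $\H$ with $S=\Gamma\backslash\H$. I use the standard fact that if $\tilde v\in T^1\H$ lifts $v\in W^{ss}u$ and $\tilde u$ lifts $u$, then $v\in h_{\mathbb R}u$ exactly when, for some lift, $\tilde v$ lies on the stable horocycle of $\tilde u$; this needs the same endpoint $\tilde u(+\infty)$ and the same Busemann level. The plan is to build a vector $v\in W^{ss}u$ whose geodesic ray shadows $u(\mathbb R^+)$ but, each time $u$ crosses one of the short closed geodesics $\gamma_n$ (length $\ell_n\to0$), makes one extra full turn around $\gamma_n$, or a prescribed number $k_n\ge0$ of them. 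Because $\gamma_n$ is very short, its collar is very wide, of width about $\log(1/\ell_n)$. Winding $k$ extra times around $\gamma_n$ changes the position in the cover by a power of the primitive hyperbolic element $\gamma_n$, but it costs only about $k\ell_n$ in length. That extra length can be absorbed while staying exponentially close in the thin part of the collar.

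\textbf{Construction.} Fix times $t_n\to\infty$ with $u(t_n)\in\gamma_n$, and pass to a subsequence so the crossings are well separated. For a sequence $k=(k_n)\in\{0,1\}^{\mathbb N}$, define in the cover a sequence of geodesic segments or broken geodesics. Follow $\tilde u$ up to near $t_n$, insert the word $\gamma_n^{k_n}$ (conjugated appropriately), and continue. Taking the limit of these finite concatenations gives a geodesic ray $\tilde v_k$ whose endpoint is $\xi_k = \lim g_1^{k_1}\cdots g_n^{k_n}\tilde u(+\infty)$, where $g_i$ are conjugates of the $\gamma_i$. Convergence should follow because the broken geodesics have angles close to $\pi$ and long segments, so a standard local-to-global lemma for piecewise geodesics in $\H$ applies.

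\textbf{Strong stability.} Next I show $d_{Sa}(g_tu,g_tv_k)\to0$. Between collars, the projected rays fellow-travel exponentially closely after a time shift. The extra turn around $\gamma_n$ shifts the time parametrization by at most $O(\ell_n)$ plus an exponentially small error. Parallel transport around $\gamma_n$ is trivial, since we are on an orientable hyperbolic surface, so there is no rotation defect. The total time shift after stage $n$ is $\sum_{i\le n}O(\ell_i)$ corrections. To get a genuine limit of zero rather than mere boundedness, the time shifts must tend to a constant, and the distance must be measured after that constant shift. Equivalently, $v_k$ is obtained from a vector in $W^{ss}u$ by applying a time-change $g_{s}$. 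I would therefore choose $v_k$ by adjusting the base point so that this asymptotic shift is zero. This requires $\sum\ell_n<\infty$, which a further subsequence guarantees, and the gains in each collar must be arbitrarily small. This is the step I expect to be the main obstacle. It needs a careful estimate on a hyperbolic collar (or on the quotient $\langle\gamma_n\rangle\backslash\H$) showing that a geodesic entering the collar near $u$ and winding once more exits within $e^{-c\log(1/\ell_n)}$ of $u$ in $T^1$. This is where earlier naive arguments plausibly fail, and I would do it with explicit Fermi coordinates around the axis of $\gamma_n$.

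\textbf{Distinctness and uncountability.} The endpoints $\xi_k$ are pairwise distinct modulo the stabilizer structure. Indeed, distinct sequences $k\ne k'$ give endpoints differing by an element not in the stabilizer of $\tilde u(+\infty)$, by a ping-pong/reduced-word argument in the free product structure on the collars. Hence $v_k\notin h_{\mathbb R}v_{k'}$ for $k\ne k'$. A countability check remains: each horocyclic orbit has only countably many lifts of endpoints (one $\Gamma$-orbit), while $\{0,1\}^{\mathbb N}$ is uncountable. So uncountably many distinct orbits $h_{\mathbb R}v_i$ lie in $W^{ss}u$, giving (2). In particular some $v_k\notin h_{\mathbb R}u$, giving (1). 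Finally, $W^{ss}u$ is a union of horocyclic orbits since $h_{\mathbb R}v\subset W^{ss}v=W^{ss}u$.
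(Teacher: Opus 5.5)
Your overall plan matches the paper's. You wind around the lifts $g_n$ of the short geodesics crossing $\tilde u(\mathbb R^+)$, take endpoints $\xi_k=\lim g_1^{k_1}\cdots g_n^{k_n}\tilde u(+\infty)$, correct by a convergent total time shift (the paper's $w_\alpha=g_{r_\alpha}v_\alpha$, with successive corrections bounded by $\ell_n$), and count using countability of $\Gamma$. But the two substantive steps are not established, and one of your claims is false.

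\textbf{Strong stability.} This is the heart of the theorem, and your proposal does not contain it. A sharp collar-exit estimate is not what is needed, and it does not suffice by itself. Changing the endpoint moves the ray at all times, including before the collar, and you do not say how infinitely many perturbations pass to the limit.

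The missing ingredient is a bound uniform in time. Suppose $\tilde u(\mathbb R^+)$ crosses the axis of $\gamma$ and $\tilde v$ is the ray from $\tilde u(0)$ to $\gamma\tilde u(+\infty)$. Then $\tau=B_{\tilde u(+\infty)}(\gamma^{-1}\tilde u(0),\tilde u(0))$ satisfies $|\tau|\le\ell(\gamma)$. Moreover, for every $t\ge 0$, $g_{t+\tau}\tilde v$ is within $8\ell(\gamma)$ of $g_t\tilde u$ or of $\gamma g_t\tilde u$; this follows from elementary upper half-plane estimates and the displacement formula.

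Apply this at stage $n$ to the already-wound ray and the conjugate $\beta_{n-1}g_n\beta_{n-1}^{-1}$, whose axis does cross that ray by a half-plane argument. This gives $d_1(g_tg_{r_n}v_n,g_tg_{r_{n-1}}v_{n-1})\le 8\ell_n$ for all $t$ beyond a constant independent of $n$. Each finite stage $g_{r_N}v_N$ lies in $h_{\mathbb R}u$. Let the stage go to infinity at fixed $t$, then let $t\to+\infty$. This gives $\limsup_{t\to+\infty}d_1(g_tw,g_tu)\le 8\sum_{n>N}\ell_n$ for every $N$. No Fermi coordinates or exponential estimates are needed.

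\textbf{Distinctness.} Your claim that $k\neq k'$ implies $v_k\notin h_{\mathbb R}v_{k'}$ is false. Suppose $k,k'$ agree beyond index $j$. Write $\beta_N(k)=P_kQ_N$ and $\beta_N(k')=P_{k'}Q_N$, where $P_k,P_{k'}$ are the products of the first $j$ factors and $Q_N$ is the common remainder. The time-normalized approximants $g_{r_N}\tilde v_N$ lie on the horocycles of $\beta_N(k)\tilde u$ and $\beta_N(k')\tilde u$. These horocycles are exchanged by $P_kP_{k'}^{-1}\in\Gamma$, and this passes to the limit. So finitely supported changes never change the horocyclic orbit, and no ping-pong argument can give your statement.

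Your final count only needs $k\mapsto\xi_k$ to have uncountable image, but even that is not automatic. Normalize $\tilde u(0)=i$, $\tilde u(+\infty)=\infty$ and $g_n^-<0<g_n^+$. Then $g_n(\infty)\approx(g_n^+-g_n^-)/\ell(g_n)$ is huge. So endpoints of tails that use $g_n$ and of tails that skip it can lie in overlapping intervals. You need one of two fixes:
\begin{itemize}
\item a sparseness condition such as $g_n(\infty)<g_{n+1}^+$; then $g_n$ maps $[g_{n+1}^+,\infty]$ into $(g_n^+,g_n(\infty)]$, and $k\mapsto\xi_k$ is injective;
\item the paper's diagonal argument: given a countable list $(x_i)$, choose each $\alpha'_{n+1}$ short enough with $\alpha_{n+1}^{\prime +}>\alpha_n^{\prime -1}\cdots\alpha_0^{\prime -1}(x_{n+1})$, so that the resulting endpoint avoids every $x_i$.
\end{itemize}
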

	
	The proof of Bellis contains several errors. In particular, there is an important gap in the proof of Lemma 4.3.1 in \cite{Bellis}. We are only able to prove a weaker statement (Definition \ref{nested_sequence}, Proposition \ref{prop3}). We present a new argument to fill the gap in the proof (Section \ref{end_proof}). When $S$ is a fine flute surface (see \cite[Section 1.4.1]{Bellis}), our argument is not needed, and Bellis' proof can be considered essentially correct.
	
	We also have simplified and corrected the proofs of Proposition 4.1.5 and Lemma 4.3.4 in \cite{Bellis}. We introduce a distance $d_1$ on $T^1S$ involving only the distance on $S$ and avoiding parallel transport, which appears to be a source of errors in the original arguments. We prove in the appendix that $d_1$ and $d_{Sa}$ define the same strong stable sets.
	
	The paper is self-contained. In Section 3, we formalize the notion of winding around a closed geodesic, which is the key tool used by Bellis. Section 4 is devoted to the proof of the Main Theorem.
	
	
	
	\maketitle
		
%
	\section*{}
	\textbf{Acknowledgment :} The third author wishes to express his sincere gratitude to the Department of Mathematics in Montevideo for its warm hospitality during September 2023.


\section{Preliminaries}

We denote by $\mathbb H^2=\{z\in\mathbb{C}:\operatorname{Im} (z)>0\}$ the hyperbolic plane endowed with the hyperbolic distance $d$, and we will denote by $
\partial \mathbb H^2=\mathbb R\cup\{\infty\}
$
 its boundary at infinity.

Recall that the orientation-preserving isometries of $\mathbb H^2$ are precisely the
Möbius transformations
\[
z\longmapsto \frac{az+b}{cz+d},
\qquad
\begin{pmatrix}
	a&b\\ c&d
\end{pmatrix}\in \operatorname{PSL}(2,\mathbb R).
\]

An isometry $\gamma\in \operatorname{PSL}(2,\mathbb R)$ is called \emph{hyperbolic} if it has two fixed
points on $\partial\mathbb H^2$, denoted by $\gamma^+$ and $\gamma^-$, called respectively
the attractive and repulsive fixed points of $\gamma$. Its translation length along the geodesic $(\gamma^-,\gamma^+)$ is denoted
by $\ell(\gamma)$.

\begin{definition}
	Let $\xi\in\partial\mathbb H^2$ and let $(\sigma(t))_{t\ge 0}$ be a geodesic ray
	converging to $\xi$. The \emph{Busemann cocycle based at $\xi$} is the function
	\[
	B_\xi(x,y):=\lim\limits_{t\to+\infty}\bigl(d(x,\sigma(t))-d(y,\sigma(t))\bigr),
	\qquad x,y\in\mathbb H^2.
	\]
\end{definition}

The Busemann cocycle satisfies the cocycle relation
\[
B_\xi(x,z)=B_\xi(x,y)+B_\xi(y,z)
\qquad \text{for all } x,y,z\in \mathbb H^2.
\]

\begin{definition}
	Fix $\xi\in\partial\mathbb H^2$ and $x_0\in\mathbb H^2$. A \emph{horocycle centered at $\xi$} passing through $x_0$
	is a level set of the map
	\[
	x\longmapsto B_\xi(x,x_0).
	\]
	Equivalently, in the upper half--plane model, horocycles are Euclidean circles tangent
	to $\partial\mathbb H^2=\mathbb R\cup\{\infty\}$ together with horizontal lines.
\end{definition}

We denote by $T^1\mathbb H^2$ the unit tangent bundle of $\mathbb H^2$.
For $\tilde v\in T^1\mathbb H^2$, we write $\tilde v(t)$ for the point at time $t$
along the geodesic determined by $\tilde v$.


The \emph{horocyclic flow} on $T^1\mathbb H^2$ is the one-parameter family
\[
(h_s)_{s\in\mathbb R}:T^1\mathbb H^2\to T^1\mathbb H^2,
\]
where $h_s(\tilde v)$ is obtained by moving $\tilde v$ along the horocycle passing through its basepoint $\tilde v(0)$ and centered at 
its forward endpoint $\tilde v(+\infty)$, with parameter $s$ equal to signed
hyperbolic arc-length. The \emph{geodesic flow} on $T^1\mathbb H^2$ will be denoted by $
(g_t)_{t\in\mathbb R}:T^1\mathbb H^2\to T^1\mathbb H^2. 
$

Although there is a natural distance, called \emph{Sasaki distance} (see \cite{Sasaki}) on $T^1\H^2$, we will consider the following distance \cite{Ballmann}: for $\tilde v,\tilde w\in T^1\mathbb H^2$, define
\[
d_1(\tilde v,\tilde w):=d(\tilde v(0),\tilde w(0))+d(\tilde v(1),\tilde w(1)).
\]

From now on, we fix a torsion-free discrete subgroup
$
\Gamma<\operatorname{PSL}(2,\mathbb R),
$
and we consider the hyperbolic surface
$
S=\Gamma\backslash \mathbb H^2
$
and its unit tangent bundle
$
T^1S=\Gamma\backslash T^1\mathbb H^2.
$

We use a tilde to denote lifts to the universal cover: if $u\in T^1S$, then
$\tilde u\in T^1\mathbb H^2$ denotes any lift of $u$.

Since the geodesic flow and the horocyclic flow on $T^1\mathbb H^2$ commute with
the action of $\Gamma$, they descend to flows on $T^1S$, still denoted by
$
(g_t)_{t\in\mathbb R}
$ \text{and} $
(h_s)_{s\in\mathbb R}.
$
These are respectively called the \emph{geodesic flow} and the \emph{horocyclic flow}
on $T^1S$.

The distance $d_1$ in $T^1\H^2$ induces a distance on $T^1S$ that we will also denote by $d_1$.

\begin{definition}
	Let $u\in T^1S$. The \emph{strong-stable set} of $u$ is
	\[
	W^{ss}u:=\left\{v\in T^1S:\lim\limits_{t\to+\infty} d_1(g_t u,g_t v)=0\right\}.
	\]
\end{definition}

\begin{remark}
 In Appendix \ref{apendice}  we prove that the distances $d_1$ and $d_{S_A}$ define the same strong-stable set. 
\end{remark}


Throughout this paper, $\tilde u_0$ in $T^1\H^2$ is defined by $\tilde u_0(0)=i$, $\tilde u_0(+\infty)=\infty$.
	
	\begin{definition}\label{nested_sequence}
%
Let $(\gamma_n)_{n\geq 0}$ be a sequence of hyperbolic isometries. We say that  $(\gamma_n)_{n\geq 0}$ is a nested sequence if:
\begin{enumerate}
	\item The group $F=\langle \gamma_n \rangle$ is discrete,
	\item  $(\gamma_n^+)_{n\geq 0}$ is an increasing sequence of $\R^+$,
	\item  $(\gamma_n^-)_{n\geq 0}$ is a decreasing sequence of $\R^-$,
	\item $\forall n\geq 0$, $(\gamma_n^-,\gamma_n^+)\cap \Tilde u_0(\R^+)=\{ \tilde u_0(t_n) \}$ with $t_n>0$,
	\item $(\ell(\gamma_n))_{n\geq 0}$ is decreasing and convergent to $0$.
\end{enumerate}
	\end{definition}
	
	\begin{property}
		Let $(\gamma_n)_{n\geq 0}$ be a nested sequence. Then
		\begin{enumerate}
				\item $\lim\limits_{n\to+\infty}\gamma_n^+=\lim\limits_{n\to+\infty}\gamma_n^-=\infty$,
			\item $\lim\limits_{n\to \infty} t_n=+\infty$.
		
		\end{enumerate}
	\end{property}
	
	\begin{proof}

Let us see that
\[
\lim\limits_{n\to+\infty}\gamma_n^+=\lim\limits_{n\to+\infty}\gamma_n^-=\infty.
\]

Suppose that this is not the case. We may assume that
\[
\lim\limits_{n\to+\infty}\gamma_n^+=\xi\in\mathbb R^+\cup\{\infty\},
\qquad
\lim\limits_{n\to+\infty}\gamma_n^-=\eta\in\mathbb R^-\cup\{\infty\}.
\]

\medskip

\noindent
If $\xi\neq \infty$, or $\eta\neq \infty$,
then $\eta\neq \xi$, and the geodesics $(\gamma_n^-,\gamma_n^+)$ converge to the geodesic $(\eta,\xi)$.
Hence $i$ stays at bounded distance from $(\gamma_n^-,\gamma_n^+)$.
Since $\ell(\gamma_n)\to 0$, it follows that
$
d(\gamma_n i,i)
$ is bounded uniformly in $n$, which contradicts the discreteness of $F$.

Thus, $\eta=\xi=\infty$, and conclude that $\lim\limits_{n\to \infty} t_n=+\infty$ from the definition of $t_n$.
	\end{proof}

\section{Winding around a closed geodesic}

\begin{definition}
	Let $\tilde{u}\in T^{1}\mathbb{H}^{2}$ and $\gamma$ a hyperbolic isometry such that
	\[
	\tilde{u}(\mathbb{R}^{+})\cap (\gamma^{-},\gamma^{+})\neq \varnothing.
	\]
	We define $\mathrm{Wind}_{\gamma}(\tilde{u})\in T^{1}\mathbb{H}^{2}$ by:
	\begin{itemize}
		\item $\mathrm{Wind}_{\gamma}(\tilde {u})(0)=\tilde{u}(0)$,
		\item $\mathrm{Wind}_{\gamma}(\tilde{u})(+\infty)=\gamma\bigl(\tilde{u}(+\infty)\bigr)$.
	\end{itemize}
\end{definition}

	Geometrically, when $\gamma$ belongs to a Fuchsian group $\Gamma$, on the surface  $S=\Gamma \backslash\H^2$, the projection of the ray $\mathrm{Wind}_{\gamma}(\tilde{u})(\mathbb{R}^{+})$, denoted by $\mathrm{Wind}_{\gamma}(u)(\mathbb{R}^{+})$, 
is obtained from ${u}(\mathbb{R}^{+})$ after winding around the closed geodesic
corresponding to $(\gamma^{-},\gamma^{+})$. 
 
	\begin{figure}[htbp]
	\centering
	\begin{normalsize} 
		\def\svgwidth{13cm}
		\input{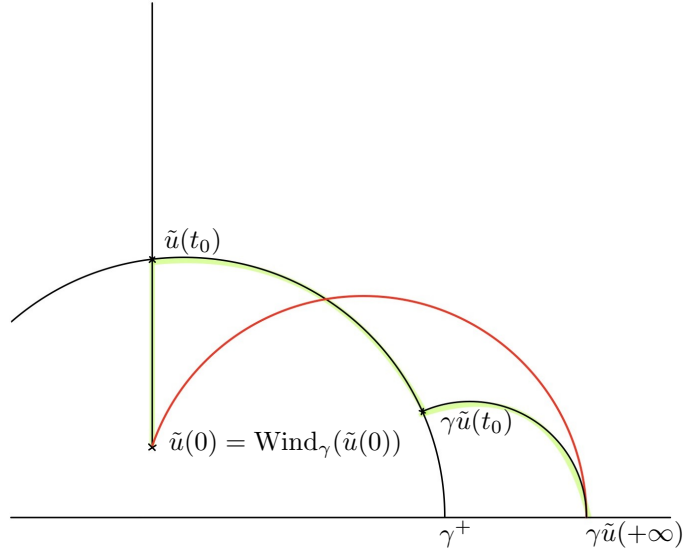}
	\end{normalsize}
	\caption{Winding around a closed geodesic in the universal covering, where $t_0$ the time such that $\tilde u(t_0)\in (\gamma^-,\gamma^+)$.}
	\label{dessin12}
\end{figure}

\begin{definition}[winding time]\label{def_winding_time}

The winding time associated to $\operatorname{Wind}_\gamma(\tilde u)$ is the real number $\tau_{\gamma,\tilde u}$ defined by
$$
\tau_{\gamma,\tilde u}=B_{\tilde u(+\infty)}(\gamma^{-1}\tilde u(0), \tilde u(0)).
$$

\end{definition}

We clearly have $g_{\tau_{\gamma,\tilde u}}(\operatorname{Wind}_\gamma(\tilde u))\in h_{\R}( \gamma \tilde u)$.

 \begin{proposition}[bound of the winding time]\label{bound_wind}
 	Let $\tilde{u}\in T^{1}\mathbb{H}^{2}$ and $\gamma$ a hyperbolic isometry such that the geodesic ray $\tilde u(\R^+)$ meets the axis $(\gamma^-,\gamma^+)$, 
 	then
 	\[
 	\left| \tau_{\gamma,\tilde u} \right| \leq \ell(\gamma).
 	\]
 	
 \end{proposition}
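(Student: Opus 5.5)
The plan is to compute everything explicitly in the upper half-plane, since the quantity $\tau_{\gamma,\tilde u}$ is invariant under conjugating the whole configuration by an isometry. The Busemann cocycle is equivariant, $B_{g\xi}(gx,gy)=B_\xi(x,y)$, and the hypothesis is geometric. So I will apply an element of $\operatorname{PSL}(2,\mathbb R)$ that sends $\gamma^-$ to $0$ and $\gamma^+$ to $\infty$. Then $\gamma(z)=e^{\ell}z$ with $\ell=\ell(\gamma)$, and the axis is the imaginary axis. I write $x=\tilde u(0)$ and $\xi=\tilde u(+\infty)$.

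\textbf{Degenerate cases.} If $\xi\in\{0,\infty\}$, then $\gamma^{-1}$ is a pure translation of length $\ell$ along a geodesic ending at $\xi$, composed with nothing else. Its displacement along horocycles centred at $\xi$ is exactly $\pm\ell$, so $|\tau|=\ell$ directly; for instance $B_\infty(z,w)=\log(\operatorname{Im}w/\operatorname{Im}z)$ gives $\tau=\ell$. Otherwise $\xi\in\mathbb R\setminus\{0\}$. By symmetry under $z\mapsto-\bar z$, which conjugates $\gamma$ to itself, I may assume $\xi>0$. The hypothesis that the ray from $x$ to $\xi$ meets the imaginary axis then forces $\operatorname{Re}x\le 0$.

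\textbf{The formula.} For $\xi\in\mathbb R$ I will use the standard formula
\[
B_\xi(z,w)=\log\frac{\operatorname{Im}w\,|z-\xi|^2}{\operatorname{Im}z\,|w-\xi|^2}.
\]
Substituting $z=\gamma^{-1}x=e^{-\ell}x$ and $w=x$, a short simplification gives
\[
\tau_{\gamma,\tilde u}=2\log\frac{|x-e^{\ell}\xi|}{|x-\xi|}-\ell .
\]
So the claim $|\tau|\le\ell$ is equivalent to the two inequalities
\[
|x-\xi|\le|x-e^{\ell}\xi|\le e^{\ell}|x-\xi|.
\]

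\textbf{Lower inequality.} The function $s\mapsto|x-s|^2=(s-\operatorname{Re}x)^2+(\operatorname{Im}x)^2$ is increasing for $s>\operatorname{Re}x$. Since $e^{\ell}\xi\ge\xi>0\ge\operatorname{Re}x$, the lower inequality follows.

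\textbf{Upper inequality.} By the triangle inequality, $|x-e^{\ell}\xi|\le|x-\xi|+(e^{\ell}-1)\xi$. It therefore suffices to have $\xi\le|x-\xi|$. This holds because $|x-\xi|\ge\xi-\operatorname{Re}x\ge\xi$.

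\textbf{Main obstacle.} The only genuinely delicate point is translating the hypothesis ``the ray meets the axis'' into $\operatorname{Re}x\le0<\xi$. Geodesics in this model are semicircles orthogonal to $\mathbb R$. A ray from $x$ ending at $\xi>0$ can only cross the imaginary axis if it starts on the closed left half. I expect to justify this by noting that such a ray meets the imaginary axis at most once. I will also double-check the sign conventions of the Busemann formula against the definition $B_\xi(x,y)=\lim\bigl(d(x,\sigma(t))-d(y,\sigma(t))\bigr)$.
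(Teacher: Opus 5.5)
Your proof is correct, but it takes a different route from the paper. You normalize to $\gamma(z)=e^{\ell}z$ and use the explicit half-plane formula for $B_\xi$. This gives the closed form $\tau_{\gamma,\tilde u}=2\log\frac{|x-e^{\ell}\xi|}{|x-\xi|}-\ell$. The claim then reduces to the two Euclidean inequalities $|x-\xi|\le|x-e^{\ell}\xi|\le e^{\ell}|x-\xi|$, which follow from $\operatorname{Re}x\le 0<\xi$. The following all check out:
\begin{itemize}
\item the formula and its sign convention, which is consistent with the paper's definition (e.g.\ $B_\infty(z,w)=\log(\operatorname{Im}w/\operatorname{Im}z)$);
\item the reduction via $z\mapsto-\bar z$, which is legitimate since Busemann cocycles are equivariant under all isometries and this map commutes with $\gamma$;
\item the degenerate cases;
\item both inequalities.
\end{itemize}

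The paper instead argues synthetically with the cocycle relation. For the upper bound it passes through the point $p$ where $\tilde u(\mathbb R^+)$ meets the axis. For the lower bound it passes through a point $p_0$ where the ray from $\gamma^{-1}\tilde u(0)$ to $\tilde u(+\infty)$ meets the axis; $p_0$ exists because $\gamma^{-1}$ preserves the two half-planes bounded by the axis. It then uses only $|B_\xi(x,y)|\le d(x,y)$, equality for points on a common ray towards $\xi$, and $d(q,\gamma q)=\ell(\gamma)$ for $q$ on the axis.

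The paper's version is model-independent and needs no case distinction. Yours is purely computational, avoids the existence argument for $p_0$, and yields more: $|\tau_{\gamma,\tilde u}|<\ell(\gamma)$ strictly unless $\tilde u(+\infty)\in\{\gamma^-,\gamma^+\}$.

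On the step you flagged, ``meets at most once'' needs the extra remark that distinct geodesics cannot be tangent. It is cleaner to say that the geodesic through $x$ ending at $\xi>0$ is a semicircle, and its other endpoint must be negative for it to meet the imaginary axis. The real part is monotone along a semicircle, which forces $\operatorname{Re}x\le 0$.
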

 \begin{proof}
 	Let $p$ be the intersection point between $(\gamma^-,\gamma^+)$ and $\tilde u(\R^+)$.
 	As $p$ belongs to $\tilde u(\R^+)$, we have
 	$$
 	B_{\tilde u(+\infty)}(p,\tilde u(0))=-d(p,\tilde u(0)).
 	$$
 	In addition, as $p\in (\gamma^-,\gamma^+)$ we know that $d(\gamma^{-1}p,p)=\ell(\gamma)$. Thus
 	\begin{align*}
 		B_{\tilde u(+\infty)}(\gamma^{-1}\tilde u(0),\tilde u(0)) 
 		& =  B_{\tilde u(+\infty)}(\gamma^{-1}\tilde u(0),\gamma^{-1}p)+  
 		\\
 		&+B_{\tilde u(+\infty)}(\gamma^{-1}p,p) +B_{\tilde u(+\infty)}(p,\tilde u(0)) \\
 		& \leq  d(\gamma^{-1}\tilde u(0),\gamma^{-1}p)+\ell(\gamma)-d(p,\tilde u(0))\\
 		& = \ell(\gamma).
 	\end{align*}
 	
 	For the other inequality, we take $p_0$ in the intersection $[\gamma^{-1}\tilde u(0),\tilde u(+\infty) )\cap (\gamma^-,\gamma^+)$. This intersection is nonempty, since the isometry $\gamma^{-1}$ preserves the two connected components of $\H^2\setminus(\gamma^-,\gamma^+)$; hence $\gamma^{-1}\tilde u(0)$ lies in the same connected component as $\tilde u(0)$.
 	
 	Then $B_{\tilde u(+\infty)}(\gamma^{-1}\tilde u(0),p_0)=d(\gamma^{-1}\tilde u(0),p_0)$ and $d(p_0,\gamma p_0)=\ell(\gamma)$. Therefore 	
 	\begin{align*}
 		B_{\tilde u(+\infty)}(\gamma^{-1}\tilde u(0),\tilde u(0)) &= B_{\tilde u(+\infty)}(\gamma^{-1}\tilde u(0),p_0)+\\
 		&+B_{\tilde u(+\infty)}(p_0,\gamma p_0)+B_{\tilde u(+\infty) }(\gamma p_0,\tilde u(0)) \\
 		& \geq d(\gamma^{-1}\tilde u(0),p_0)-d(p_0,\gamma p_0)-d(\gamma p_0,\tilde u(0))\\
 		&= - \ell (\gamma).
 	\end{align*}
 \end{proof}
 
 The following proposition will be key for the proof of the main theorem.
 
 
 \begin{proposition}[Key Proposition]\label{key_prop} 
 	Let $\tilde{u}\in T^{1}\mathbb{H}^{2}$ and $\gamma$ a hyperbolic isometry such that the geodesic ray $\tilde u(\R^+)$ meets the axis $(\gamma^-,\gamma^+)$. Then
 	\[\forall\, t\geq0, ~
 	\min \{ d_1\!\left(g_{t+\tau_{\gamma,\tilde u}}\bigl(\operatorname{Wind}_{\gamma}(\tilde u)\bigr),\, g_t\tilde u\right),
 	d_1\!\left(g_{t+\tau_{\gamma,\tilde u}}\bigl(\operatorname{Wind}_{\gamma}(\tilde u)\bigr),\, \gamma g_t\tilde u\right)
 	\}
 	\le 8 \ell(\gamma).
 	\]
 \end{proposition}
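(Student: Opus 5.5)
The strategy is to split the time axis at the moment the ray crosses the axis of $\gamma$. Before that moment I compare $\operatorname{Wind}_\gamma(\tilde u)$ with $\tilde u$, which has the same base point. After it I compare with $\gamma\tilde u$, which has the same endpoint. Throughout I write $w=\operatorname{Wind}_\gamma(\tilde u)$ and $\tau=\tau_{\gamma,\tilde u}$, I let $p=\tilde u(t_0)$ be the intersection point of $\tilde u(\R^+)$ with $(\gamma^-,\gamma^+)$, and I use $\xi=\tilde u(+\infty)$. Since $d_1$ is a sum of two footpoint distances (at times $t$ and $t+1$), it suffices to show that for every $s\ge 0$ one of
\[
d\bigl(w(s+\tau),\tilde u(s)\bigr),\qquad d\bigl(w(s+\tau),\gamma\tilde u(s)\bigr)
\]
is at most $4\ell(\gamma)$. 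I also need the choice of which one to be consistent for $s=t$ and $s=t+1$. To arrange this, I choose the cutoff so that the "early" estimate holds for all $s\le t_0+1$ and the "late" estimate holds for all $s\ge t_0$. The two ranges overlap on a unit interval, so every pair $\{t,t+1\}$ falls into one of them.

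\textbf{Early range.} I first show that $w$ crosses the axis at a point $q$ with $d(q,p)\le\ell(\gamma)$ or $d(q,\gamma p)\le\ell(\gamma)$. This holds because $w$ runs from $\tilde u(0)$, on one side of the axis, to $\gamma\xi$, and the rays $\tilde u$ and $\gamma\tilde u$ cross the axis at $p$ and $\gamma p$, which are at distance $\ell(\gamma)$ apart. The endpoints $\xi$ and $\gamma\xi$ lie on the far side of the axis, on the two sides of the geodesic ray from $p$ toward them, so $q$ is trapped on the axis segment between $p$ and $\gamma p$.

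Next I use convexity of $s\mapsto d(w(s),\tilde u(s))$ in $\H^2$. This function vanishes at $s=0$, and near $s=t_0$ it is at most $2\ell(\gamma)$, because both geodesics pass within $\ell(\gamma)$ of the segment $[p,\gamma p]$ at times differing by at most $\ell(\gamma)$. Convexity then bounds it on $[0,t_0]$. A shift by $1$ costs only a bounded factor in the same quantity, since the two geodesics diverge at most exponentially with a controlled angle. Finally, passing from $w(s)$ to $w(s+\tau)$ costs $|\tau|\le\ell(\gamma)$ by Proposition \ref{bound_wind}.

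\textbf{Late range.} Here $w$ and $\gamma\tilde u$ are both asymptotic to $\gamma\xi$, and by the remark after Definition \ref{def_winding_time} we have $g_\tau w\in h_\R(\gamma\tilde u)$. So $g_{s+\tau}w$ and $\gamma g_s\tilde u$ lie on a common horocycle centered at $\gamma\xi$, and their footpoint distance is non-increasing in $s$. It therefore suffices to bound it at $s=t_0$. At that time $\gamma\tilde u(t_0)=\gamma p$, while $w(t_0+\tau)$ is within about $2\ell(\gamma)$ of $\gamma p$ by the crossing estimate above together with $|\tau|\le\ell(\gamma)$.

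\textbf{Main obstacle.} The hardest step is the uniform control near $t_0$: showing that the crossing point $q$ of $w$ and its crossing time are $O(\ell(\gamma))$-close to those of $\tilde u$ or $\gamma\tilde u$, with constants independent of the angle at which $\tilde u$ meets the axis. When this angle is small, the comparison triangles degenerate, so thinness cannot give a bound in terms of $\ell(\gamma)$. I would instead argue with Busemann functions at $\gamma\xi$ and with the trapping of $q$ between $p$ and $\gamma p$. The constant $8$ then comes from adding the two footpoint bounds of $4\ell(\gamma)$ each.
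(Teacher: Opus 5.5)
Your decomposition, comparing $g_{t+\tau}w$ with $g_t\tilde u$ for $t\le t_0$ and with $\gamma g_t\tilde u$ for $t\ge t_0$, is exactly the paper's: its scenarios (a) and (c) versus (b). The following parts are fine:
\begin{itemize}
\item the trapping $q\in[p,\gamma p]$;
\item the bound $d(w(s),\tilde u(s))\le 2\ell(\gamma)$ on $[0,t_0]$;
\item the late range via $g_\tau w\in h_{\mathbb{R}}(\gamma\tilde u)$ and horocyclic monotonicity.
\end{itemize}
What you call the main obstacle is not one. Write $q=w(t_0')$. Since $w$ and $\tilde u$ start at the same point, the triangle inequality gives $|t_0-t_0'|\le d(p,q)\le\ell(\gamma)$. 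Likewise $|t_0+\tau-t_0'|=|B_{\gamma\xi}(q,\gamma p)|\le\ell(\gamma)$. Both hold uniformly in the crossing angle.

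The genuine gap is the early estimate for $s\in[t_0,t_0+1]$, which you need for the pairs $\{t,t+1\}$ with $t_0-1<t<t_0$. Convexity gives no upper bound beyond $t_0$. Your claim that a unit shift costs only a bounded factor is false. For geodesics leaving $\tilde u(0)$ at angle $\theta$ one has $\sinh\bigl(d(w(s),\tilde u(s))/2\bigr)=\sinh(s)\sin(\theta/2)$. So the factor is $\sinh(t_0+1)/\sinh(t_0)$, which blows up as $t_0\to0$: for a transverse crossing with $t_0=0$ the distance vanishes at $t_0$ but not at $t_0+1$. Even for $t_0\ge1$ the factor is about $\sinh 2/\sinh 1\approx 3.1$. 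That turns $2\ell(\gamma)$ into roughly $6\ell(\gamma)$, more than the $4\ell(\gamma)$ per footpoint your own reduction requires for the constant $8$.

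The missing step, which is the paper's scenario (c), is to pass through $\gamma\tilde u(s)$ on this interval:
\[
d\bigl(w(s+\tau),\tilde u(s)\bigr)\le d\bigl(w(s+\tau),\gamma\tilde u(s)\bigr)+d\bigl(\gamma\tilde u(s),\tilde u(s)\bigr),\qquad s\in[t_0,t_0+1].
\]
The first term is your late-range bound, valid since $s\ge t_0$. The second is the displacement of $\gamma$ at $z=\tilde u(s)$, where $d\bigl(z,(\gamma^-,\gamma^+)\bigr)\le d(z,p)=s-t_0\le 1$. The displacement formula then gives
\[
\sinh\bigl(d(z,\gamma z)/2\bigr)=\cosh\bigl(d(z,(\gamma^-,\gamma^+))\bigr)\sinh\bigl(\ell(\gamma)/2\bigr)\le\cosh(1)\sinh\bigl(\ell(\gamma)/2\bigr)\le\sinh\ell(\gamma),
\]
so this term is at most $2\ell(\gamma)$. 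Combined with your Busemann bound of $2\ell(\gamma)$ in the late range, you get $4\ell(\gamma)$ on $[t_0,t_0+1]$ and $3\ell(\gamma)$ on $[0,t_0]$. Your scheme then closes with constant $8$. The paper instead compares at unshifted times, getting $4\ell(\gamma)$, $4\ell(\gamma)$ and $6\ell(\gamma)$ in its three scenarios, and pays $2|\tau|\le 2\ell(\gamma)$ at the end.
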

 \begin{proof}
 	For simplicity, we denote $\tilde v=\mathrm{Wind}_{\gamma}(\tilde u)$ and $\tau=\tau_{\gamma,\tilde u}$.
 	Up to applying an isometry of $\mathbb{H}^2$, we can assume that $\tilde u(0)=i$, $\tilde u(+\infty)=\infty$ and $\gamma^{+}>0$.
 	
 	Let $t_\gamma\ge 0$ be such that $\tilde u(t_\gamma)\in(\gamma^{-},\gamma^{+})$. Since $i$ and $\gamma \, \infty$ lie in different half-planes bounded by $(\gamma^{-},\gamma^{+})$, the geodesic ray $\tilde v(\mathbb R^{+})$ must intersect the axis $(\gamma^{-},\gamma^{+})$. Let $t'_\gamma\ge 0$ be such that $\tilde v(t'_\gamma)\in(\gamma^{-},\gamma^{+})$.
 	
 	We show that the intersection point $\tilde v(t'_\gamma)\in(\gamma^{-},\gamma^{+})$ lies in the following hyperbolic segment:
 	\[\tilde v(t'_\gamma)\in
 	[\tilde{u}(t_\gamma), \gamma \tilde{u}(t_\gamma)].
 	\]
 	We observe that $\tilde v(+\infty)$ is the real interval $]\gamma^+, +\infty[$ and $\tilde v(-\infty)$ is in the interval $]\gamma^-,0[$. Hence, $\tilde v(t'_\gamma)$ is on the right of $\tilde u(t_\gamma)$.  Applying $\gamma^{-1}$, we obtain that $\gamma^{-1} \tilde v (+\infty) = \infty$ and $\gamma^{-1} \tilde v (-\infty)$ is also on the interval $]\gamma^-,0[$. As a consequence, $\gamma^{-1} \tilde v(t'_\gamma)$, which is the intersection of the vertical ray $\gamma^{-1}\tilde{v}(\mathbb{R}^+)$ and the geodesic $(\gamma^{-}, \gamma^{+})$, is at left of the point $\tilde{u}(t_\gamma)$.
 	Therefore, $\tilde v(t'_\gamma)\in(\gamma^{-},\gamma^{+})$ has to be on the geodesic segment
 	$[\tilde{u}(t_\gamma), \gamma \tilde{u}(t_\gamma)]$. 
 	
 	We conclude that $d(\tilde v(t'_\gamma),\tilde{u}(t_\gamma) )$ and $d(v(t'_\gamma), \gamma \tilde{u}(t_\gamma) )$ are both less than $\ell(\gamma)$. Moreover, by a triangular inequality,
 	\[
 	|t_\gamma - t'_\gamma | \le d(\tilde v(t'_\gamma),\tilde{u}(t_\gamma) ) \le \ell(\gamma).
 	\]

 	We first prove Proposition \ref{key_prop} on $S$, replacing $d_1$ by $d$.	
 	
 	
 	\noindent\textbf{Case 1.}  If $0\le t\le t_\gamma$, then 
 	\[
 	d\bigl(\tilde v(t),\tilde u(t)\bigr)\le 2\ell(\gamma).
 	\] 

 Since the distance in $\mathbb{H}^2$ between two geodesic rays starting at the same point is increasing, it follows 
 \[
  	\forall\,0\le t\le t_\gamma,\qquad
  	d\bigl(\tilde v(t),\tilde u(t)\bigr)\le d\bigl(\tilde v(t_\gamma),\tilde u(t_\gamma)\bigr).
  	\]
Using the bounds established above, one obtains
 	\[
 	d\bigl(\tilde v(t_\gamma),\tilde u(t_\gamma)\bigr) \le 
 	|t_\gamma - t'_\gamma | + d\bigl(\tilde v(t'_\gamma),\tilde u(t_\gamma)\bigr)  \le
 	2\ell(\gamma),
  	\]
which implies the statement.
 	
%
 	
 	\smallskip
 	\noindent\textbf{Case 2:} If $ t\ge  t_\gamma$, then 
 	\[
 	d\bigl(\tilde v(t), \gamma \tilde u(t)\bigr)\le 2\ell(\gamma).
 	\] 
 	
 	Put
 	\[
 	\gamma^{-1}\tilde v(t)=a+i\,e^{\tau+t}\quad\text{and}\quad \tilde u(t)=i\,e^{t}.
 	\]
 	We have \cite[\S7.20]{Beardon} 
 	\begin{equation*}
 		\left(\sinh\frac{d\bigl(\gamma^{-1}\tilde v(t),\tilde u(t)\bigr)}{2}\right)^{2}
 		=
 		\frac{a^{2}+e^{2t}(e^{\tau}-1)^{2}}{4e^{\tau}e^{2t}}
 		=
 		\frac{a^{2}}{4e^{\tau}}\,e^{-2t}
 		+\frac{(e^{\tau}-1)^{2}}{4e^{\tau}}.
 	\end{equation*}
 	It follows that $d\bigl(\gamma^{-1}\tilde v(t),\tilde u(t)\bigr)$ is decreasing, and hence for $t\ge t_\gamma$,
 	\[
 	d\bigl(\gamma^{-1}\tilde v(t),\tilde u(t)\bigr)
 	\le d\bigl(\gamma^{-1}\tilde v(t_\gamma),\tilde u(t_\gamma)\bigr).
 	\]
 	Moreover,
 	\[
 	d\bigl(\gamma^{-1}\tilde v(t_\gamma),\tilde u(t_\gamma)\bigr)
 	\le |t_\gamma-t'_\gamma| + d\bigl(\gamma^{-1}\tilde v(t'_\gamma), \tilde u(t_\gamma) \bigr).
 	\]
 	By the bounds proved above, each term on the right hand side is bounded by $\ell (\gamma)$.
 	We conclude that, for $t\ge t_\gamma$, $d\bigl(\tilde v(t), \gamma \tilde u(t)\bigr) = d\bigl( \gamma^{-1} \tilde v(t), \tilde u(t)\bigr)  \le 2\ell(\gamma)$. 
 	
 	\smallskip
 	
 Let us now prove Proposition \ref{key_prop}. Recall that 
 \[
d_1\!\left(g_{t} \tilde v ,\, g_t\tilde u\right) = d(\tilde v (t), \tilde u (t)) + d(\tilde v (t+1), \tilde u (t+1)). 
 \]
 We distinguish three scenarios:
 
 \begin{enumerate}[(a)]
 	\item If $t \le t_\gamma -1 $, by the first case, we have that both
 	$d(\tilde v(t), \tilde u(t))$ and \\
 	$d(\tilde v(t+1), \tilde u(t+1))$
 	are bounded by $2\ell(\gamma)$. Then
 	$
 	d_1(g_{t}\tilde v, g_t \tilde u) \le 4\ell(\gamma).
 	$
 	
 	\item If $t \ge t_\gamma$, by the second case, we can still
 	bound both
 	$d(\tilde v(t), \gamma \tilde u(t))
 	$ and $ 
 	d(\tilde v(t+1), \gamma \tilde u(t+1))$
 	by $2\ell(\gamma)$. Then
 	$
 	d_1(g_{t}\tilde v, \gamma g_t \tilde u) \le 4 \ell(\gamma).
 	$
 	
 	\item If $t_\gamma -1 < t < t_\gamma$. We have
 	\begin{align*}
 		d_1(g_{t}\tilde v, g_t \tilde u) &= d(\tilde v (t), \tilde u (t)) + d(\tilde v (t+1), \tilde u (t+1)) \\
 		&\le  d(\tilde v (t), \tilde u (t)) + d(\tilde v (t+1), \gamma \tilde u (t+1)) + d( \gamma \tilde u (t+1), \tilde u (t+1)).
 	\end{align*}
	The first and the second terms in the last line are bounded by $2\ell (\gamma)$ by Cases 1 and 2 above, respectively. 
 	
	In order to bound the third term we apply the formula of the displacement function \cite[Theorem 7.35.1]{Beardon}. Writing $z= \tilde u(t+1)$, we obtain
 	\begin{equation*}\label{eq:5.11}
 		\sinh\!\left(\frac{d(z, \gamma z)}{2}\right)
 		=
 		\cosh \bigl(d(z, (\gamma^-, \gamma^+))\bigr)
 		\sinh\!\left(\frac{\ell(\gamma)}{2}\right).
 	\end{equation*}
 
 We notice that \[ d(z, (\gamma^-, \gamma^+) )  \le d(\tilde u(t+1), \tilde u (t_\gamma) ) = t+1 - t_\gamma \le 1 .\] We obtain 
 \[
 \sinh\!\left(\frac{d(z, \gamma z)}{2}\right) \leq 2 \sinh\!\left(\frac{\ell(\gamma)}{2}\right).
 \]
 Since $2\sinh(\frac{\ell(\gamma)}{2})\leq \sinh \ell(\gamma)$, then the third term $d(z,\gamma z)$ is bounded by $2\ell(\gamma)$. We obtain that, if $t_\gamma -1 < t < t_\gamma$, then 
 \[
 d_1(g_{t}\tilde v, g_t \tilde u) \le 6 \ell(\gamma).
 \]

 \end{enumerate}

Finally, we observe that 
\begin{align*}
	d_1\!\left(g_{t+\tau} \tilde v ,\, g_t\tilde u\right) & \le d_1\!\left(g_{t+\tau} \tilde v ,\, g_t\tilde v\right) + d_1\!\left(g_{t} \tilde v ,\, g_t\tilde u\right)\\
	& =  2|\tau| + d_1\!\left(g_{t} \tilde v ,\, g_t\tilde u\right) \\
	&\le  2\ell(\gamma) + d_1\!\left(g_{t} \tilde v ,\, g_t\tilde u\right),
\end{align*}
and similarly we have
\[
d_1\!\left(g_{t+\tau} \tilde v ,\, \gamma g_t\tilde u\right) \le 2\ell(\gamma) + d_1\!\left(g_{t} \tilde v ,\, \gamma g_t\tilde u\right).
\]
Together with the previous bounds, these imply the statement.
 \end{proof}


\section{Proof of Main Theorem}
\begin{proposition}\label{prop3}
	Let $S=\Gamma\backslash\H^2$ be a hyperbolic surface and $u\in T^1S$. If $u(\R^+)$ crosses infinitely many closed geodesics with length converging to $0$, then there exists a sequence $(\gamma_n')_{n\geq 0}$ of hyperbolic isometries in $\Gamma$, and an isometry $f$ such that:
	\begin{enumerate}
		\item $(\gamma_n=f\gamma_n'f^{-1})_{n\geq 0}$ is a nested sequence (Definition \ref{nested_sequence}),
		\item The geodesic ray $f^{-1}(\tilde u_0)(\R^+)\subset \H^2$ projects onto $u(\R^+)\subset S=\Gamma\backslash\H^2$.
	\end{enumerate}
\end{proposition}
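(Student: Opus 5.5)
Fix a lift $\tilde u$ of $u$ and choose $f\in\operatorname{PSL}(2,\R)$ with $f\tilde u=\tilde u_0$, i.e. $f\tilde u(0)=i$ and $f\tilde u(+\infty)=\infty$. Then $f^{-1}(\tilde u_0)=\tilde u$ projects onto $u$, which gives item (2). It remains to select the $\gamma_n'$ and check the five conditions of Definition \ref{nested_sequence} for $\gamma_n=f\gamma_n'f^{-1}$. Condition (1) is automatic: $\langle\gamma_n\rangle\subset f\Gamma f^{-1}$ is discrete.

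For the selection, the ray $u(\R^+)$ crosses closed geodesics $c_k$ with $\ell(c_k)\to0$. For each crossing, the lift of $c_k$ through the crossing point of $\tilde u(\R^+)$ is the axis of a primitive hyperbolic $\gamma'\in\Gamma$ with $\ell(\gamma')=\ell(c_k)$. After conjugating by $f$, this axis $(\gamma^-,\gamma^+)$ meets the vertical ray $\tilde u_0(\R^+)$ at a point $\tilde u_0(t)$ with $t>0$. Its endpoints therefore lie on opposite sides of $0$, and replacing $\gamma'$ by its inverse if needed, we get $\gamma^+>0>\gamma^-$. I also discard the single crossing at $t=0$, if there is one. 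This handles condition (4).

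Next I pass to a subsequence with strictly decreasing lengths tending to $0$, which gives condition (5). Two different lifts of closed geodesics crossing $\tilde u_0(\R^+)$ are distinct geodesics of $\Gamma$-invariant collections. So the crossing points are discrete along the ray, and by discreteness only finitely many of them lie in any compact segment. Hence $t_n\to\infty$ along the chosen sequence.

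The main obstacle is monotonicity, conditions (2) and (3): $\gamma_n^+$ increasing and $\gamma_n^-$ decreasing, i.e. the axes are nested half-circles around $\tilde u_0$. The key tool is the collar lemma. A closed geodesic of length $\ell$ has an embedded collar of width $w(\ell)$ with $\sinh w(\ell)\sinh(\ell/2)=1$. Two closed geodesics of length at most some $\epsilon_0$ (e.g. $2\operatorname{arcsinh}1$) never cross, and distinct short geodesics are disjoint. So for $n$ large the geodesics $c_n$ are simple and pairwise disjoint. Their lifts are then pairwise disjoint geodesics in $\H^2$, and the one lift through $\tilde u_0(t_m)$ must not intersect the one through $\tilde u_0(t_n)$.

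Two disjoint geodesics each crossing the vertical ray at distinct heights $t_m<t_n$ are nested in the upper half-plane: the half-circle crossing higher contains the lower one. If it did not, the two half-circles would cross, because each separates $i$ from $\infty$ on the vertical line. Hence $\gamma_n^+>\gamma_m^+$ and $\gamma_n^-<\gamma_m^-$, with strictness since the geodesics are distinct. I must still make sure the same geodesic, crossed repeatedly, is not selected twice with different lifts. Different lifts of the same simple closed geodesic are also disjoint, so the same nesting argument applies. Since $t_n$ increases and the lengths strictly decrease, repetitions are excluded anyway.

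So after ordering by $t_n$ and passing to a subsequence where both $t_n$ increases and $\ell$ strictly decreases, all five conditions hold. Such a subsequence exists because $\ell\to0$ and $t_n\to\infty$ along any infinite family of distinct crossings.
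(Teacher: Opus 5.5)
Your argument is correct, but it obtains conditions (2)--(3) by a different mechanism from the paper's.

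\textbf{The paper's route.} It picks one crossing lift per closed geodesic, orients each $\gamma_n$ so that $\gamma_n^-<0<\gamma_n^+$, and then simply extracts a subsequence with monotone endpoints. What makes this extraction possible is the discreteness argument from the proof of Property 1:
\begin{itemize}
\item the $\gamma_n$ are distinct and $\ell(\gamma_n)\to 0$, so no subsequence of axes can converge to a geodesic with distinct endpoints, since otherwise $d(i,\gamma_n i)$ would stay bounded;
\item the crossing height $\sqrt{-\gamma_n^-\gamma_n^+}>1$ rules out both endpoints tending to $0$;
\item hence $\gamma_n^+\to+\infty$ and $\gamma_n^-\to-\infty$, and a nested subsequence can be chosen.
\end{itemize}

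\textbf{Your route.} You use the collar lemma, together with the separate fact that sufficiently short closed geodesics are simple. This shows that all sufficiently short lifts meeting the ray are pairwise disjoint, hence pairwise nested. Monotonicity of the endpoints then reduces to monotonicity of the crossing times $t_n$, which you get from discreteness.

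\textbf{Comparison.} The paper's route is more elementary: it uses nothing beyond discreteness of $\Gamma$ and $\ell\to 0$. Yours imports the collar lemma and the lower bound on lengths of non-simple closed geodesics, valid on arbitrary, possibly infinite-type, complete hyperbolic surfaces. In exchange you get a stronger structural statement: any family of sufficiently short lifts crossing the ray is already nested once ordered by crossing height.

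Two points need tightening.
\begin{itemize}
\item Distinct disjoint geodesics can be asymptotic, so ``strictness since the geodesics are distinct'' is not enough. You need the standard fact that two hyperbolic elements of a Fuchsian group sharing one fixed point share both, so distinct axes have no common endpoint.
\item Finiteness of the crossings in a compact segment of the ray should be justified by bounded displacement, not by $\Gamma$-invariance of the collection of lifts. If $p$ lies on the axis of $\gamma$, then $d(i,\gamma i)\le 2d(i,p)+\ell(\gamma)$. With $\ell$ bounded, discreteness then allows only finitely many such $\gamma$.
\end{itemize}
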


\begin{proof}
	Let $\tilde u\in T^1\mathbb H^2$ be a lift of $u$.	
	Since $\mathrm{PSL}(2,\mathbb R)$ acts transitively on $T^1\mathbb H^2$, there exists an isometry
	$
	f\in \mathrm{PSL}(2,\mathbb R)
	$
	such that
	$
	\tilde u=f^{-1}(\tilde u_0).
	$
	Therefore the geodesic ray
	$
	f^{-1}(\tilde u_0)(\mathbb R^+)=\tilde u(\mathbb R^+)
	$
	projects onto
	$
	u(\mathbb R^+)\subset S.
	$
	
	By assumption, the ray $u(\mathbb R^+)$ crosses infinitely many closed geodesics whose lengths converge to $0$. For each such closed geodesic, choose a hyperbolic element
	$
	\gamma_n'\in \Gamma
	$
	whose axis projects onto that closed geodesic and meets the ray $\tilde u(\R^+)$. Passing to a subsequence, we may assume that
	$
	\ell(\gamma_n')
	$
	is decreasing and converges to $0$.
	
	Set
	$
	\gamma_n=f\gamma_n'f^{-1}.
	$
	Clearly, each $\gamma_n$ is hyperbolic and
	$
	\ell(\gamma_n)=\ell(\gamma_n').
	$
	Moreover, since the axis of $\gamma_n'$ meets $\tilde u(\mathbb R^+)$, the axis of $\gamma_n$ meets
	$
	f(\tilde u(\mathbb R^+))=\tilde u_0(\mathbb R^+).
	$
	
	We shall now prove that, after possibly replacing some $\gamma_n$ by their inverses and extracting a subsequence, $(\gamma_n)_{n\geq 0}$ is a nested sequence.
	
	\medskip
	
	\noindent
	Let
	$
	F=\langle \gamma_n \; ;\; n\geq 0\rangle.
	$
	Since
	$
	F\subset f\Gamma f^{-1}
	$
	and $\Gamma$ is discrete, $F$ is discrete. Thus, condition~(1) is verified.
	
	\medskip
	
	\noindent
	Since the axis of each $\gamma_n$ intersects the vertical ray
	$
	\tilde u_0(\mathbb R^+)=[i,\infty),
	$
	its two endpoints lie on opposite sides of $0$. Hence, after possibly replacing $\gamma_n$ by $\gamma_n^{-1}$, we may assume that
	$
	\gamma_n^-<0<\gamma_n^+
	 \text{ for all }n,
	$ and that the sequence $(\gamma_n)_{n\geq 0}$ and $(\gamma_n^-)_{n\geq 0}$ are increasing and decreasing, respectively, which gives conditions~(2) and (3). 
	
	By construction, the axis of each $\gamma_n$ intersects $\tilde u_0(\mathbb R^+)$. Since this axis has endpoints $\gamma_n^-<0<\gamma_n^+$, the intersection consists of exactly one point. Therefore, for every $n\geq 0$, there exists $t_n>0$ such that
	\[
	(\gamma_n^-,\gamma_n^+)\cap \tilde u_0(\mathbb R^+)=\tilde u_0(t_n).
	\]
	This is condition~(4).
	
	\medskip
	
	\noindent
	Finally, since the sequence $(\ell(\gamma_n'))$ is chosen decreasing with limit $0$, and
	$
	\ell(\gamma_n)=\ell(\gamma_n'),
	$
	therefore $(\ell(\gamma_n))_{n\geq 0}$ is decreasing and converges to $0$. This is condition~(5).
	
	\medskip
\end{proof}


Let $S=\Gamma\backslash \H^2$ and $u\in T^1 S$ satisfying the conditions of the Main Theorem \ref{thE} and $f\in\operatorname{PSL(2,\R)}$ given by Proposition \ref{prop3}, Clearly, it is enough to prove the Main Theorem for $S_0=f\Gamma f^{-1}\backslash \H^2$ and $u_0\in T^1S_0$ lifting to $\tilde u_0$.

Let $(\gamma_n')_{n\geq 0}$ given by Proposition \ref{prop3}, and set $\gamma_n=f\gamma_n'f^{-1}$, which is a nested sequence. 
 Our goal is to give conditions on a subsequence $\alpha=(\alpha_n)_{n\geq0}$ of $(\gamma_n)_{n\geq 0}$ to guarantee the existence of $\tilde w_\alpha \in T^1\H^2$ with $\tilde w_\alpha(+\infty)=\lim\limits_{n\to +\infty}\alpha_0\ldots\alpha_n(\infty)$, such that its projection $w_\alpha$ onto $T^1S_0$ satisfies $ w_\alpha\in W^{ss}u_0-h_\R u_0$.




\subsection{Definition of $v_n$ and $w_\alpha$}

\par\,

	Let $(\alpha_n)_{n\in\\N}$ be a subsequence of $(\gamma_n)_{n\in \N}.$ We introduce $\beta_n=\alpha_0\ldots\alpha_n\in\Gamma_0,$ and $\tilde v_n\in T^1\H^2$ defined by:
	\begin{itemize}
		\item $\tilde v_n(0)=i,$
		\item $\tilde v_n(+\infty)=\beta_n(\infty)$.
	\end{itemize}
	

\begin{lemma}\label{lemma_vectors}
	The sequence $(\tilde v_n)_{n\in\N}$ converges towards $\tilde v_\alpha\in T^1 \H^2 $ defined by $\tilde v_\alpha (0)=i$ and $\tilde v_\alpha(+\infty)=\lim\limits_{n\to +\infty}\beta_n(\infty)$.
	
\end{lemma}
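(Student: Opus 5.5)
The whole proof reduces to showing that the endpoints $\xi_n:=\beta_n(\infty)$ converge in $\partial\mathbb H^2$. Once $\xi_n\to\xi$, the vectors $\tilde v_n$ all have basepoint $i$, and the map sending a point of $\partial\mathbb H^2$ to the unit vector at $i$ pointing toward it is a homeomorphism onto $T^1_i\mathbb H^2$. Moreover, $d_1(\tilde v_n,\tilde v_\alpha)=d(\tilde v_n(1),\tilde v_\alpha(1))$, which tends to $0$ because the points at distance $1$ from $i$ vary continuously with the direction. So convergence of $\tilde v_n$ follows immediately from convergence of $\xi_n$.

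To prove that $\xi_n$ converges, I would use a nesting argument for half-planes. For each $n$, let $H_n$ be the closed half-plane bounded by the axis $(\alpha_n^-,\alpha_n^+)$ that does not contain $i$; its boundary at infinity is the arc $I_n$ through $\infty$ between $\alpha_n^+$ and $\alpha_n^-$. By conditions (2)--(4) of Definition \ref{nested_sequence}, the axes are disjoint or equal.

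One point needs checking here. Two axes in the nested family cannot cross, since they have $\gamma_m^-<\gamma_n^-<0<\gamma_n^+<\gamma_m^+$. Strict monotonicity holds after extraction, because distinct closed geodesics of decreasing length give distinct axes. Hence $I_{n+1}\subset I_n$, and by Property 1 the arcs $I_n$ shrink to $\{\infty\}$.

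The key step is the inclusion $\beta_n(\infty)\in \beta_{n-1}(I_n)$. More generally, I will show by induction that
\[
\beta_n(\overline{I_{n+1}})\subset \beta_{n-1}(I_n).
\]
Since $\infty\in I_{n+1}$, it suffices to show $\alpha_n(I_{n+1})\subset I_n$. Because $\alpha_n$ is hyperbolic with attracting fixed point $\alpha_n^+$ and repelling fixed point $\alpha_n^-$, it preserves each of the two arcs of $\partial\mathbb H^2\setminus\{\alpha_n^\pm\}$. In particular $\alpha_n(I_n)=I_n$, and so $\alpha_n(I_{n+1})\subset\alpha_n(I_n)=I_n$. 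Applying $\beta_{n-1}$ then gives
\[
\beta_n(I_{n+1})\subset \beta_{n-1}(I_n).
\]
It follows that the sets $J_n:=\beta_{n-1}(I_n)$ form a decreasing sequence of closed arcs, and $\xi_m\in J_n$ for all $m\ge n$.

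The main obstacle is showing that the Euclidean (visual from $i$) diameter of $J_n$ tends to $0$; nestedness alone only gives a nonempty intersection. My plan is to observe that $J_n$ is the boundary at infinity of the half-plane $\beta_{n-1}(H_n)$. Its visual size from $i$ is governed by the distance
\[
d\bigl(i,\beta_{n-1}(H_n)\bigr)=d\bigl(\beta_{n-1}^{-1}i,\,H_n\bigr),
\]
so I need this distance to tend to infinity.

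I would first try to get this from discreteness of $F$, in the spirit of the proof of Property 1. The half-planes $\beta_{n-1}(H_n)$ are nested and decreasing. If their distances to $i$ stayed bounded, then the bounding geodesics $\beta_{n-1}(\alpha_n^-,\alpha_n^+)$ would accumulate on a geodesic at bounded distance from $i$. These bounding geodesics are axes of the conjugates $\beta_{n-1}\alpha_n\beta_{n-1}^{-1}\in F$, whose translation lengths tend to $0$. Then $i$ would be moved a bounded, and eventually arbitrarily small, amount by infinitely many distinct elements of $F$, contradicting discreteness. Distinctness holds because the conjugates have distinct translation lengths after extraction.

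Hence the diameter of $J_n$ tends to $0$, and $\xi_n$ converges to the single point $\xi=\bigcap_n J_n=\lim_n\beta_n(\infty)$. This gives the limit $\tilde v_\alpha$ as claimed.
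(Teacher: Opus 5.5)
Your proof is correct, but it takes a different and heavier route than the paper.

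\textbf{The paper's route.} The paper works with the order on $\mathbb R$. Because $\alpha_k^-<0<\alpha_k^+$, the pole $\alpha_k^{-1}(\infty)$ is negative, so each $\alpha_k$ maps $(0,\infty]$ increasingly into $(0,\infty)$. Hence $0<\alpha_{n-1}\alpha_n(\infty)<\alpha_{n-1}(\infty)$. Applying the increasing map $\beta_{n-2}$ then shows that $(\beta_n(\infty))_n$ is a decreasing sequence of positive reals, hence convergent. This uses only the positions of the fixed points. Monotonicity replaces your shrinking-diameter step, so neither discreteness of $F$ nor $\ell(\alpha_n)\to 0$ is needed.

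\textbf{Your route.} Your nested-arc argument needs both of these, together with the displacement formula. In return it proves more. The arcs $\beta_{n-1}(I_n)$ shrink to the limit point; equivalently, $d(\beta_{n-1}^{-1}i,H_n)\to+\infty$. Consequently $\beta_n(x_n)$ has the same limit for any choice $x_n\in I_{n+1}$, not just for $x_n=\infty$.

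\textbf{One implicit step to state.} To identify the distance from $i$ to the half-plane $\beta_{n-1}(H_n)$ with the distance from $i$ to its boundary axis, you need $i\notin\beta_{n-1}(H_n)$. This does follow from your own nesting: $\beta_{n-1}(H_n)\subset H_0$ and $i\notin H_0$.
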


\begin{proof}
	We have to prove that the sequence $(\beta_n(\infty))_{n\in\N}\subset \partial \H^2$ converges. 
	Using the dynamics of $\gamma_n$ we have:
	\begin{itemize}
		\item $\forall n\geq 1$, $0<\alpha_{n-1}\alpha_n\infty<\alpha_{n-1}\infty$,
		\item $\forall i\geq 0,$ if $0<x<y$, then $0<\alpha_i x<\alpha_iy.$
	\end{itemize}
	It follows that $(\beta_n(\infty))_{n\in\N}$ is a decreasing sequence of positive numbers and hence converges.

 \end{proof}



Applying $\alpha_0^{-1}$ to $\tilde v_n(0)=i$ maps it into the same connected component of $\H^2-(\alpha_0^-,\alpha_0^+)$, repeating this argument inductively leads to the conclusion that $\alpha_{n}^{-1}\ldots\alpha_0^{-1}(i)$ belongs to the connected component of $\H^2-(\alpha_n^-,\alpha_n^+)$ containing $i$. It follows that
 the ray $\alpha_{n}^{-1}\ldots\alpha_0^{-1}\tilde v_n(\R ^+)=\beta_n^{-1}\tilde v_n(\R ^+)$ crosses $(\alpha_{n+1}^-,\alpha_{n+1}^+) $ and hence that $\tilde v_n(\R^+)$ crosses $(\beta_n(\alpha_{n+1}^-)),\beta_n(\alpha_{n+1}^+) )$.

As a consequence we have for all $n\geq 0$
$$
\tilde v_{n+1}=\mathrm{Wind}_{\beta_n\alpha_{n+1}\beta_n^{-1}}(\tilde v_n),
$$
and accordingly to Proposition \ref{bound_wind}
$$
\left|\tau_{\beta_n\alpha_{n+1}\beta_n^{-1},\tilde v_n}\right |\leq \ell(\alpha_{n+1}).
$$

	\begin{figure}[htbp]
	\centering
	\begin{normalsize} 
		\def\svgwidth{13cm}
		\input{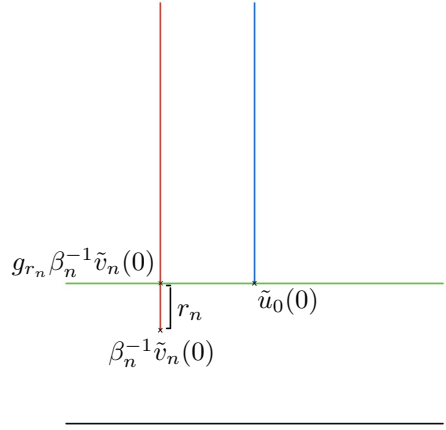}
	\end{normalsize}
	\caption{$g_{r_n}\beta_n^{-1}\tilde v_n\in h_\R\tilde u$.}
	\label{dessin13}
\end{figure}

The next step of the construction is to ensure that the time spent to wind around all the geodesics $(\alpha_n^-,\alpha_n^+)$ is finite.

\begin{proposition}[Convergence of the time to wind around closed geodesics]\label{lema_4}
	
Let $r_n=B_\infty(\beta_n^{-1}i,i))$. If for every $n\geq 0$,  $\ell(\alpha_n)<\frac{1}{2^n}$, then the sequence $(r_n)_{n\in\N}$ converges towards a real number $r_\alpha$.    
\end{proposition}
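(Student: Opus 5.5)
We write $r_n$ as a telescoping sum and show the series converges absolutely, using the bound on the winding time in Proposition \ref{bound_wind}. Since $\infty$ is fixed by the convention that Busemann cocycles are taken at $\infty$, the cocycle relation gives
\[
r_{n+1}-r_n = B_\infty(\beta_{n+1}^{-1}i,\beta_n^{-1}i).
\]
Writing $\beta_{n+1}^{-1}=\alpha_{n+1}^{-1}\beta_n^{-1}$ and using invariance of Busemann cocycles under isometries, $B_\xi(gx,gy)=B_{g^{-1}\xi}(x,y)$ read backwards, we get
\[
r_{n+1}-r_n = B_{\beta_n(\infty)}\bigl(\beta_n\alpha_{n+1}^{-1}\beta_n^{-1} i,\, i\bigr).
\]

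The key observation is that this is exactly a winding time. Since $\tilde v_n(0)=i$ and $\tilde v_n(+\infty)=\beta_n(\infty)$, with the conjugate $\delta_n=\beta_n\alpha_{n+1}\beta_n^{-1}$ the expression equals $B_{\tilde v_n(+\infty)}(\delta_n^{-1}\tilde v_n(0),\tilde v_n(0))=\tau_{\delta_n,\tilde v_n}$ (Definition \ref{def_winding_time}). It was already established before the statement that $\tilde v_n(\R^+)$ crosses the axis $(\beta_n\alpha_{n+1}^-,\beta_n\alpha_{n+1}^+)$ of $\delta_n$. Hence Proposition \ref{bound_wind} applies and gives
\[
|r_{n+1}-r_n|=|\tau_{\delta_n,\tilde v_n}|\le \ell(\delta_n)=\ell(\alpha_{n+1})<2^{-(n+1)}.
\]

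Therefore $\sum_n |r_{n+1}-r_n|<\infty$, so $(r_n)$ is Cauchy and converges to a real number $r_\alpha$. We also have the explicit bound $|r_\alpha-r_0|\le \sum_{n\ge 1}\ell(\alpha_n)\le 1$, and $r_0=B_\infty(\alpha_0^{-1}i,i)$ is itself bounded by $\ell(\alpha_0)$ via Proposition \ref{bound_wind} applied to $\tilde u_0$ and $\alpha_0$.

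The only delicate step is the change of base point in the Busemann cocycle, i.e.\ checking carefully that $B_\infty(\alpha_{n+1}^{-1}\beta_n^{-1}i,\beta_n^{-1}i)=B_{\beta_n\infty}(\beta_n\alpha_{n+1}^{-1}\beta_n^{-1}i,i)$ with the correct sign convention, so that it matches $\tau$ exactly and not its negative. The sign is in fact irrelevant for the absolute bound, so even a slip there does not affect convergence.
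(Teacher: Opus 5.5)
Your proposal is correct and matches the paper's proof: you telescope via the cocycle relation, transport the Busemann cocycle by $\beta_n$ to identify $r_{n+1}-r_n$ with the winding time $\tau_{\beta_n\alpha_{n+1}\beta_n^{-1},\tilde v_n}$, and bound it by $\ell(\alpha_{n+1})<2^{-(n+1)}$ using Proposition \ref{bound_wind}, which shows $(r_n)$ is Cauchy. Your side remark that $r_0=\tau_{\alpha_0,\tilde u_0}$ is also right; the paper later writes $\tau_{\alpha_0,\tilde v_0}$, which appears to be a slip.
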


\begin{proof}
We have $\left | r_{n+1}-r_n  \right |=\left | B_\infty(\alpha_{n+1}^{-1}\beta_n^{-1}(i), \beta_n^{-1}(i)) \right |$.
Hence
 $$\left | r_{n+1}-r_n  \right |=\left | B_{\beta_n \infty}(\beta_n\alpha_{n+1}^{-1}\beta_n^{-1}(i),i) \right |.$$
 It follows that

$$
\left | r_{n+1}-r_n  \right |=\left | \tau_{\beta_n\alpha_{n+1}\beta_n^{-1}}(\tilde v_n) \right|\leq \ell(\alpha_{n+1})\leq \frac{1}{2^{n+1}}.$$
As a consequence $(r_n)_{n\in \N}$ is a Cauchy sequence and hence converges.
\end{proof}

For a subsequence $\alpha$ of $(\gamma_n)_{n\N}$ satisfying the assumptions of Proposition \ref{lema_4} we will define $w_\alpha$ as follows:
$$w_\alpha=g_{r_\alpha}v_\alpha=\lim\limits_{n\to \infty}g_{r_n}v_n.$$

\begin{remark}\label{obs_orbit}
As direct consequence of the definition of $w_\alpha$ we deduce that $w_\alpha\in \overline{h_\R u_0}$.
\end{remark}

\subsection{Construction of suitable $\alpha=(\alpha_n)_{n\in\N }$}

\begin{proposition}\label{lema_clave}
	Let $\alpha=(\alpha_n=\gamma_{p_n})_{n\geq0}$ be a subsequence of $(\gamma_n)_{n\geq0}$ verifying $\ell({\alpha_n})\leq \frac{1}{2^{2n+3}}$. Then, 
	for every $n,\ell\geq 0$ there exists $T_\ell\ge 0$ with the property that,
	\[\forall \,t\ge T_\ell,~
	d_1\bigl(g_t g_{r_n}v_n,\, g_t u_0\bigr)
	\le \left(\sum_{k=0}^{n}\frac{1}{2^{k}}\right)\frac{1}{2^{\ell}} .
	\]
\end{proposition}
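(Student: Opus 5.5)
Induction on $n$, with a single estimate for the inductive step: the Key Proposition (Proposition \ref{key_prop}) applied to the winding $\tilde v_{n+1}=\mathrm{Wind}_{\beta_n\alpha_{n+1}\beta_n^{-1}}(\tilde v_n)$, projected to $S_0=\Gamma_0\backslash\H^2$. Because $\beta_n\alpha_{n+1}\beta_n^{-1}\in\Gamma_0$, the two alternatives in the minimum of the Key Proposition (distance to $g_t\tilde v_n$ or to $\beta_n\alpha_{n+1}\beta_n^{-1} g_t\tilde v_n$) project to the same quantity. Hence on $T^1S_0$, for all $t\ge 0$,
\[
d_1\bigl(g_{t+\tau_n}v_{n+1},\,g_t v_n\bigr)\le 8\,\ell(\alpha_{n+1}),\qquad \tau_n:=\tau_{\beta_n\alpha_{n+1}\beta_n^{-1},\tilde v_n}.
\]
The proof of Proposition \ref{lema_4} showed that $r_{n+1}-r_n=\pm\tau_n$. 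I will first fix the sign so that $r_{n+1}=r_n+\tau_n$; this comes from the cocycle relation together with $g_{\tau}\mathrm{Wind}_\gamma(\tilde u)\in h_\R(\gamma\tilde u)$. Substituting $t\mapsto t+r_n$, valid once $t\ge -r_n$, then gives
\[
d_1\bigl(g_t g_{r_{n+1}}v_{n+1},\,g_tg_{r_n}v_n\bigr)\le 8\,\ell(\alpha_{n+1})\le \frac{8}{2^{2n+5}}=\frac{1}{2^{2n+2}}.
\]

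For the base case $n=0$, $v_0$ is obtained from $u_0$ by a single winding around $\alpha_0$. The same argument with $\tilde u=\tilde u_0$ and $\gamma=\alpha_0$, together with $r_0=B_\infty(\alpha_0^{-1}i,i)=\tau_{\alpha_0,\tilde u_0}$, gives $d_1(g_tg_{r_0}v_0,g_tu_0)\le 8\ell(\alpha_0)\le 1$ for $t\ge\max(0,-r_0)$.

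This shows that the bound $8\ell$ is too crude for the stated factor $2^{-\ell}$, which must go to $0$ as $\ell\to\infty$ at fixed $n$. So the real content, and the main obstacle, is a decay statement: for fixed $n$, $d_1(g_tg_{r_n}v_n,g_tu_0)\to0$. I plan to get this from the structure of the construction. The vector $g_{r_n}v_n$ lies on $h_\R(\beta_n u_0)$ in the lift, since $g_{r_n}\beta_n^{-1}\tilde v_n\in h_\R\tilde u_0$ (Figure \ref{dessin13}); hence $g_{r_n}v_n$ lies on the horocyclic orbit $h_\R u_0$ on $S_0$. Horocyclic orbits lie in the strong stable set, because $d(h_s g_t\tilde u_0,g_t\tilde u_0)\le |s|e^{-t}\to0$. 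This gives, for each $n$ and $\ell$, a time $T$ after which $d_1(g_tg_{r_n}v_n,g_tu_0)\le 2^{-\ell}$. That already suffices for the inequality, since the sum $\sum_{k=0}^n2^{-k}$ is at least $1$.

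If the intended statement instead requires control of $T_\ell$ uniform in $n$, which the geometric sum suggests is needed later when passing to $w_\alpha$, then I would telescope. Write $d_1(g_tg_{r_n}v_n,g_tu_0)\le\sum_{k}d_1(g_tg_{r_{k+1}}v_{k+1},g_tg_{r_k}v_k)+d_1(g_tg_{r_0}v_0,g_tu_0)$. For each $k$, choose $t$ larger than the winding time $t_{p_{k+1}}$ lifted along $\beta_k$ (using Property (2), $t_n\to\infty$) so that Case 2 of the Key Proposition applies, where the distance is decreasing. Then bound the $k$-th term by $2^{-k}2^{-\ell}$, with $T_\ell$ the maximum of finitely many such times. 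The hardest step is exactly this sharpened per-step decay, namely making Case 2's monotone decrease quantitative with the factor $2^{-\ell}$. I would try to do it using the explicit formula in Case 2, $\sinh^2(d/2)=\frac{a^2}{4e^\tau}e^{-2t}+\frac{(e^\tau-1)^2}{4e^\tau}$, together with $|\tau|\le\ell(\alpha_{k+1})\le 2^{-2k-5}$.
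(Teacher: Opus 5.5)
\textbf{There is a genuine gap: your argument does not give a threshold $T_\ell$ independent of $n$.}

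Several of your preliminary estimates are correct and match Part (b) of the paper's induction:
the sign $r_{n+1}=r_n+\tau_n$, the base case, and the per-winding bound $d_1\bigl(g_tg_{r_{k+1}}v_{k+1},g_tg_{r_k}v_k\bigr)\le 8\ell(\alpha_{k+1})\le 2^{-2(k+1)}$ for $t\ge -r_k$.

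Your main argument is that each $g_{r_n}v_n$ lies on $h_{\mathbb{R}}u_0$. This only proves the reading in which the threshold depends on both $n$ and $\ell$. Under that reading, as you note, the factor $\sum_{k=0}^n2^{-k}$ plays no role and the statement reduces to $g_{r_n}v_n\in W^{ss}u_0$. The notation $T_\ell$ and the later use both require $T_\ell$ to be independent of $n$: at the end of the proof of the Main Theorem one fixes $t\ge T_\ell$ and lets $n\to\infty$.

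For the uniform version your sketch does not work. You telescope from $k=0$ and ask \emph{every} term to be $\le 2^{-k}2^{-\ell}$ through per-step decay past the $k$-th winding. But these winding thresholds are unbounded in $k$ (the windings happen further and further out, as your appeal to Property (2) indicates). So their maximum over $k\le n-1$ depends on $n$, and over all $k$ it is infinite. Put differently, for any fixed $t$, all terms whose winding occurs after time $t$ are in the Case 1 regime. There they are controlled only by the time-independent bound $8\ell(\alpha_{k+1})$, with no decay. Hence ``the maximum of finitely many such times'' is not justified. The step you single out as hardest, a quantitative form of the Case 2 decay, is neither available uniformly in $k$ nor needed.

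\textbf{The missing idea: split the indices at $\ell$.} This is what the exponent $2n+3$ is designed for.
For $k+1\ge\ell$ no decay is needed. Since $|r_k|\le\sum_j\ell(\alpha_j)<1$ for every $k$, your per-winding bound holds for all $t\ge 1$, and $2^{-2(k+1)}\le 2^{-(k+1)}2^{-\ell}$.
Only the finitely many indices $n\le\ell$ need the horocyclic decay. Choose $T_\ell\ge 1$ with $d_1\bigl(g_tg_{r_n}v_n,g_tu_0\bigr)\le 2^{-\ell}$ for all $t\ge T_\ell$ and all $n\le\ell$. This choice depends only on $\alpha_0,\dots,\alpha_\ell$.
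Then for $n>\ell$ and $t\ge T_\ell$, telescoping only from index $\ell$ gives
\[
d_1\bigl(g_tg_{r_n}v_n,g_tu_0\bigr)\le 2^{-\ell}+\sum_{k=\ell}^{n-1}2^{-(k+1)}2^{-\ell}\le\Bigl(\sum_{k=0}^{n}2^{-k}\Bigr)2^{-\ell}.
\]
This is the paper's induction on $(P_m)$ written out directly. Case (1) creates the new threshold $T_{m+1}$ from the horocyclic decay of $v_0,\dots,v_m$. Case (2) shows that the old thresholds $T_\ell$, $\ell\le m$, remain valid for $v_{m+1}$, because the Key Proposition error $2^{-2(m+1)}$ is at most $2^{-(m+1)}2^{-\ell}$.
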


%

\subsubsection*{Proof of Proposition~\ref{lema_clave}}
For $m\ge 0$, we say that $\alpha_0,\dots,\alpha_m$ satisfy Property $(P_m)$ if:
\begin{itemize}
	\item for every $n,\ell\in\{0,\dots,m\}$, there exists $T_l\ge 0$ such that for every $t\ge T_l$,
	\[ \forall t\ge T_l,\quad
	d_1\bigl(g_t g_{r_n} v_n,\, g_t u_0\bigr)
	\le \left(\sum_{k=0}^{n}\frac{1}{2^{k}}\right)\frac{1}{2^{\ell}} .
	\]
\end{itemize}

\medskip
\noindent\textbf{Step $(P_0)$.}
By definition, $\tilde v_0=\operatorname{Wind}_{\alpha_0}(\tilde u_0)$ and $r_0=\tau_{\alpha_0,\tilde v_0}$.
Recall that $\alpha_0=\gamma_{p_0}$ is such that $\ell(\alpha_0)\le \frac{1}{2^3}$.
Applying Key Proposition \ref{key_prop} one obtains $(P_0)$:
\begin{itemize}
	\item$\forall t\ge T_0=0$,
	\[
	d_1\bigl(g_tg_{r_0} v_0,\, g_t u_0\bigr)\le 1.
	\]
\end{itemize}

\medskip
\noindent\textbf{Induction step.}
Let $m\geq0$, and suppose that $(P_m)$ is satisfied.
Since $|r_0|\le \ell(\alpha_0)\le 1$ (Proposition \ref{bound_wind}) and
$|r_n-r_{n-1}|\le \ell(\alpha_n)\le \frac{1}{2^{n}}$ (Proof of Proposition \ref{lema_4}), we have
\[
\forall n\in\{0,\dots,m\},\qquad
|r_n|\le 1+\sum_{k=0}^{n}\frac{1}{2^{k}}\le 3.
\]

\medskip
Recall that $\alpha_{m+1}=\gamma_{p_{m+1}}$ is such that $\ell(\alpha_{m+1})\le \frac{1}{2^{2(m+1)+3}}$. We have to prove the existence of	
$T_{m+1}\ge 0$ satisfying:
\begin{enumerate}
	\item for every $n\in\{0,\dots,m\},$ 
	\[\forall \, ~ t\ge T_{m+1},~
	d_1\bigl(g_t g_{r_{n}}v_n,\, g_tu_0\bigr)
	\le \left(\sum_{k=0}^{m}\frac{1}{2^{k}}\right)\frac{1}{2^{m+1}};
	\]
	\item for every $\ell\in\{0,\dots,m+1\},$ 
	\[\forall \, ~ t\ge T_{\ell},~
	d_1\bigl(g_t g_{r_{m+1}}v_{m+1},\, g_tu_0\bigr)
	\le \left(\sum_{k=0}^{m+1}\frac{1}{2^{k}}\right)\frac{1}{2^{\ell}} .
	\]
\end{enumerate}

\medskip
\noindent\textbf{Case (1).}
By construction, there exists $s_n\in \R$ such that $\beta_n^{-1}g_{r_n}\tilde v_n= h_{s_n}(\tilde u_0)$.
Since $g_th_sg_{-t}=h_{se^{-t}}$, we have that
$$
d_1(g_th_{s_n}\tilde u_0,g_t\tilde u_0)=d_1(h_{s_ne^{-t}}g_t\tilde u_0,g_t\tilde u_0).
$$
Since $d((h_{s_ne^{-t}}g_t\tilde u_0)(0),\tilde u(t))\leq |s_n|e^{-t}$, we obtain
$$d_1(g_th_{s_n}\tilde u_0 ,g_t \tilde u_0)\leq |s_n|(e^{-t}+e^{-(t+1)}).$$

Let $t_n\geq 0$ such that
$$
\forall\, t\geq t_n, ~d_1(g_th_{s_n}\tilde u_0,g_t\tilde u_0)\leq \frac{1}{2^{m+1}}.
$$
Take $T_{m+1}'=\max\{t_0,\dots,t_m\}$: 	
\[\forall \, t\ge T_{m+1}', ~
d_1\bigl(g_t\beta_n^{-1}g_{r_n}\tilde v_n,\, g_t\tilde u_0\bigr)
\le \frac{1}{2^{m+1}}.
\]
This implies
\begin{equation}\label{case1}
	\forall \, t\ge T_{m+1}', ~
	d_1\bigl(g_t g_{r_n} v_n,\, g_t u_0\bigr)
	\le \left(\sum_{k=0}^{m}\frac{1}{2^{k}}\right)\frac{1}{2^{m+1}}.
\end{equation}

\medskip
\noindent\textbf{Case (2).}
We have
\[
d_1\bigl(g_t g_{r_{m+1}} v_{m+1},\, g_t u_0\bigr)
\le
\underbrace{d_1\bigl(g_t g_{r_{m}} v_{m},\, g_t u_0\bigr)}_{(a)}
+
\underbrace{d_1\bigl(g_t g_{r_{m+1}} v_{m+1},\, g_t g_{r_{m}} v_{m}\bigr)}_{(b)}.
\]

\medskip
\noindent\textbf{Part (a).}
Since $(P_m)$ is satisfied, for any $\ell\in\{0,\dots,m\}$ there exists $T_\ell\ge 0$ such that for every $t\ge T_\ell$,
\[
(a)\le \left(\sum_{k=0}^{m}\frac{1}{2^{k}}\right)\frac{1}{2^{\ell}} .
\]
Moreover, by (\ref{case1}), we have
\[ \forall\, t\ge T_{m+1}^{'}:~
(a)\le \left(\sum_{k=0}^{m}\frac{1}{2^{k}}\right)\frac{1}{2^{m+1}}.
\]

\medskip
\noindent\textbf{Part (b).}
Recall that $\tilde v_{m+1}=\operatorname{Wind}_{\beta_m \alpha_{m+1}\beta_m^{-1}}(\tilde v_m)$ and
\[
\tau_{\beta_m \alpha_{m+1}\beta_m^{-1},\,\tilde v_m}=r_{m+1}-r_m.
\]
Since $\ell(\alpha_{m+1})\leq \frac{1}{2^{2(m+1)+3}}$, applying Key Proposition \ref{key_prop}, we obtain:
\[\forall \, t\geq 0,~
d_1\bigl(g_{t+r_{m+1}-r_m}\tilde v_{m+1},\, g_t\tilde v_{m}\bigr)
\le \frac{1}{2^{2(m+1)}}.
\]
Hence, for every $s>-r_m$,
\[
d_1\bigl(g_s g_{r_{m+1}} v_{m+1},\, g_s g_{r_m} v_m\bigr)
\le \frac{1}{2^{2(m+1)}}.
\]
Since $|r_m|\le 3$, one has for every $t\ge 3$,
\[
d_1\bigl(g_t g_{r_{m+1}} v_{m+1},\, g_t g_{r_m} v_m\bigr)
\le \frac{1}{2^{2(m+1)}}.
\]

\medskip
Finally, let
\[
T_{m+1}=\max\{3,\,T_{m+1}'\}.
\]
Since for every $\ell\in\{0,\dots,m+1\}$,
\[
\frac{1}{2^{2(m+1)}}\le \frac{1}{2^{m+1}}\cdot\frac{1}{2^{\ell}},
\]
one obtains for every $\ell\in\{0,\dots,m+1\}$ and every $t\ge T_\ell$,
\[
d_1\bigl(g_t g_{r_{m+1}} v_{m+1},\, g_t u_0\bigr)
\le \left(\sum_{k=0}^{m+1}\frac{1}{2^{k}}\right)\frac{1}{2^{\ell}}.
\]
\qed

\begin{remark}
Since the only condition imposed on $\alpha_{m+1}$ at the Induction Step is \\ $\ell(\alpha_{m+1})\leq \frac{1}{2^{2(m+1)+3}}$ and since the sequence $(\ell(\gamma_n))_{n\geq 0}$ is decreasing  and converges to $0$, it follows that the set of such sequences $\Sigma$ is uncountable.  

\end{remark}

\subsection{End of the Proof of the Main Theorem}\label{end_proof}

\begin{proof}
		We denote $\Sigma$ the set of $(\alpha_n)_{n\geq 0}$ with $\ell(\alpha_n)\leq \frac{1}{2^{(2n+3)}}$.
		Let $(\alpha_n)_{n\in\N}$ a subsequence in $\Sigma$ (see Proposition \ref{lema_clave}). Referring back to the notations of Lemma \ref{lemma_vectors} and Proposition \ref{lema_4}, we will prove that the vector $w_\alpha=g_{r_\alpha} v_\alpha$ of $T^1S_0$ belongs to $W^{ss}u_0.$ 
		Observe that, thanks to Proposition \ref{lema_clave}, for all $n,l\geq 0$ and $t\geq T_l:$
		$$ d_1(g_{t+r_n}v_n,g_t u)<\frac{1}{2^{l-1}}$$ And thus, taking the limit on $n$, we can state that for all $t\geq T_l$:
		$$ d_1(g_{t+r}v_\alpha,g_t u_0)<\frac{1}{2^{l-1}}, $$
		then, for all $\varepsilon >0 $ there exists $l\geq 0$ such that for all $t\geq T_l$:
		$$ d_1(g_tw_\alpha,g_tu_0)=d_1(g_{t+r}v_\alpha,g_tu_0)\leq\varepsilon,$$
		which implies that $w\in W^{ss}u$.

Let
$
W(\infty)=\left\{\tilde w_{\alpha}(+\infty),\ \alpha\in\Sigma\right\}.
$
This set is uncountable. Suppose it is not, then there exists a sequence
$
\bigl(\tilde w_{\alpha^i}(+\infty)=x_i\bigr)_{i\in\mathbb N}
$
with $\alpha^i\in\Sigma$ such that
$
W(\infty)=\{x_i,\ i\in\mathbb N\}.
$
Let us construct $\alpha'\in\Sigma$ such that
$
\tilde w_{\alpha'}(+\infty)\neq x_i
$
for any $i\in\mathbb N$.

Choose
$
\alpha_0'=\gamma_{p_{k_0}}
$
such that:
\begin{itemize}
	\item $\ell(\alpha_0')\leq \dfrac{1}{2^3}$,
	\item $
	\alpha_0^{\prime +}>x_0.
$
\end{itemize}

By induction, if $\alpha_0',\ldots,\alpha_n'$ are chosen, take
$
\alpha_{n+1}'
$
satisfying:
\begin{itemize}
	\item $\ell(\alpha_{n+1}')\leq \dfrac{1}{2^{2(n+1)+3}}$,
	\item $
	\alpha_{n+1}^{\prime +}>
	\alpha_n^{\prime -1}\cdots \alpha_0^{\prime -1}(x_{n+1}).
	$
\end{itemize}

This construction is possible since
$
\lim\limits_{n\to+\infty}\gamma_n^+=\infty
$
and the sequence
$
\bigl(\ell(\gamma_n)\bigr)_{n\geq 0}
$
decreases to $0$.

Observe that, using the position of the axis
$
(\gamma_n^-,\gamma_n^+) \text{ for } n\geq 0
$
and the dynamics of $\gamma_n$, for any $0\leq n\leq m$, we have:
\[
\alpha_n'\ \alpha_{n+1}'\ \ldots\ \alpha_m'(\infty)>\alpha_n^{\prime +}.
\]

We deduce for $n\geq 1$:
\[
\alpha_{n-1}^{\prime -1}\cdots \alpha_0^{\prime -1}
\bigl(\tilde w_{\alpha'}(+\infty)\bigr)
>
\alpha_n^{\prime +}
\qquad (*)
\]

By construction, for $n\geq 1$, we have
\[
\alpha_n^{\prime +}>
\alpha_{n-1}^{\prime -1}\cdots \alpha_0^{\prime -1}(x_n)
\qquad (**)
\]

Both inequalities imply:
\[
\forall n\geq 1,\qquad
\alpha_{n-1}^{\prime -1}\cdots \alpha_0^{\prime -1}
\bigl(\tilde w_{\alpha'}(\infty)\bigr)
\neq
\alpha_{n-1}^{\prime -1}\cdots \alpha_0^{\prime -1}(x_n),
\]
and hence
$
\tilde w_{\alpha'}(+\infty)\neq x_n.
$
Moreover
$
\tilde w_{\alpha'}(+\infty)\neq x_0
$
since
$
\alpha_0^{\prime +}>x_0
\text{ and }
\tilde w_{\alpha'}(+\infty)>\alpha_0^{\prime +}.
$

We conclude that $W(\infty)$ is not countable.

In particular, since $\Gamma$ is countable, the set
$
\Gamma W(\infty)=\{\gamma(x),\ \gamma\in\Gamma,\ x\in W(\infty)\}
$
is an uncountable disjoint union of $\Gamma$-orbits.

Let $v\in W^{ss}(u_0)$, since $v\in h_\R(u_0)$
implies
$
\tilde v(\infty)\in \Gamma\infty,
$
the set $W^{ss}(v_0)$ is an uncountable union of horocycle trajectories.

\end{proof}

\appendix

\section{Equality between $W_{d_1}^{ss}(u)$ and $W_{d_2}^{ss}(u)$}\label{apendice}

Let $\tilde v,\tilde w\in T^{1}\mathbb H^{2}$ such that $\tilde v(0)\neq \tilde w(0)$.
Consider the geodesic ray starting at $\tilde v(0)$ and passing through $\tilde w(0)$.
Denote $C_{\tilde v,\tilde w}\in T^{1}\mathbb H^{2}$ its unitary tangent vector at $\tilde v(0)$.

By construction, for $s=d(\tilde v(0),\tilde w(0))$, the base point of $g_{s}C_{\tilde v,\tilde w}$ is $\tilde w(0)$.
Denote $\alpha_{\tilde v,\tilde w}\in[0,2\pi)$ (resp.\ $\beta_{\tilde v,\tilde w}$) the oriented angle
defined by $\widehat{(C_{\tilde v,\tilde w},\tilde v)}$ (resp.\ $\widehat{\left(g_{s}(C_{\tilde v,\tilde w}),\tilde w\right)}$).

We introduce a distance
$ 
d_{2}:T^{1}\mathbb H^{2}\times T^{1}\mathbb H^{2}\longrightarrow \mathbb R_{+}
$
defined by
\[
d_{2}(\tilde v,\tilde w)=d\bigl(\tilde v(0),\tilde w(0)\bigr)+\bigl|\alpha_{\tilde v,\tilde w}-\beta_{\tilde v,\tilde w}\bigr|,
\]
where $d$ is the hyperbolic distance.

It turns out that $d_{2}$ is equivalent to the Sasaki distance $d_{Sa}$ induced by the
Sasaki metric on $T^{1}\mathbb H^{2}$ \cite{Sasaki}.

Recall that $d_{1}$ is the distance on $T^{1}\mathbb H^{2}$ defined by
\[
d_{1}(\tilde v,\tilde w)=d\bigl(\tilde v(0),\tilde w(0)\bigr)+d\bigl(\tilde v(1),\tilde w(1)\bigr).
\]

Clearly, both $d_{1}$ and $d_{2}$ are invariant by $G=\mathrm{PSL}(2,\mathbb R)$.

Let $\Gamma$ be a torsion free 
Fuchsian group.
Since $d_{1},d_{2}$ are $G$-invariant, they induce distances, denoted again $d_{1}$ and $d_{2}$,
on $T^{1}S=\Gamma\backslash T^{1}\mathbb H^{2}$ as follows: for $u,v\in T^{1}S$,
\[
\forall i=1,2,\qquad d_{i}(u,v)=\inf_{\gamma\in\Gamma} d_{i}(\tilde u,\gamma\tilde v),
\]
where $\tilde u,\tilde v$ project onto $u,v$.

For $i=1,2$, set
\[
W_{d_{i}}^{ss}(u)=\left\{\,v\in T^{1}S\;\middle|\; \lim\limits_{t\to+\infty} d_{i}(g_{t}u,g_{t}v)=0 \right\},
\]
where $g_{\R}$ is the geodesic flow.

\begin{theorem}\label{equiv_stable_sets}
	\[
	W_{d_{1}}^{ss}(u)=W_{d_{2}}^{ss}(u).
	\]
\end{theorem}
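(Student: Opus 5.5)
The plan is to show that $d_1$ and $d_2$ are uniformly equivalent at small scales on $T^1\mathbb H^2$. That is, there is a function $\varphi$ with $\varphi(\varepsilon)\to 0$ as $\varepsilon\to 0$ such that for all $\tilde v,\tilde w\in T^1\mathbb H^2$,
\[
d_1(\tilde v,\tilde w)\le \varphi\bigl(d_2(\tilde v,\tilde w)\bigr)\quad\text{and}\quad d_2(\tilde v,\tilde w)\le \varphi\bigl(d_1(\tilde v,\tilde w)\bigr).
\]
Both distances are $G$-invariant and the quotient distances are infima over $\Gamma$. So if $d_i(g_tu,g_tv)\to 0$, we can pick lifts $\tilde u_t$ and $\gamma_t\tilde v_t$ realizing the infimum up to a small error. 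Then $d_j(g_tu,g_tv)\le d_j(\tilde u_t,\gamma_t\tilde v_t)\le \varphi(d_i(\tilde u_t,\gamma_t\tilde v_t))\to 0$. This gives both inclusions $W^{ss}_{d_1}(u)\subset W^{ss}_{d_2}(u)$ and the reverse. Everything therefore reduces to the local comparison in $\mathbb H^2$, and by $G$-invariance I may normalize $\tilde v(0)=i$ and $\tilde v$ vertical.

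\textbf{From $d_2$ to $d_1$.} Suppose $d(\tilde v(0),\tilde w(0))=s$ and $|\alpha-\beta|=\theta$ are small. Let $\tilde w'$ be the parallel transport of $\tilde v$ along the segment $[\tilde v(0),\tilde w(0)]$. In the construction defining $d_2$, $\tilde w'$ makes angle $\alpha$ with $g_s C_{\tilde v,\tilde w}$, so $\tilde w$ and $\tilde w'$ share a basepoint and differ by the angle $\theta$.

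I will then bound two distances. First, $d(\tilde w(1),\tilde w'(1))\le 2\sinh(1)\sin(\theta/2)$-type quantity, which follows from hyperbolic trigonometry on two rays from a common point. Second, $d(\tilde v(1),\tilde w'(1))\le C s$, since geodesics with parallel-transported initial vectors separate at most by a factor $\cosh 1$ over unit time; this is a Jacobi-field or compactness bound. The triangle inequality then gives $d_1(\tilde v,\tilde w)\le s+Cs+C'\theta$.

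\textbf{From $d_1$ to $d_2$.} Suppose $d(\tilde v(0),\tilde w(0))=a$ and $d(\tilde v(1),\tilde w(1))=b$ are small. Then $d(\tilde v(0),\tilde w(0))=a$ is already controlled, and it remains to control the angle term $|\alpha-\beta|$. Using $\tilde w'$ as above, the angle $\theta$ between $\tilde w$ and $\tilde w'$ at $\tilde w(0)$ satisfies
\[
d(\tilde w(1),\tilde w'(1))\le b+d(\tilde v(1),\tilde w'(1))\le b+Ca.
\]
Two unit geodesic segments from a common point whose endpoints are at distance $\delta$ have angle $\theta$ with $\sinh(\delta/2)=\sinh(1)\sin(\theta/2)$. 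Hence $\theta\to 0$ as $a,b\to 0$.

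The remaining care is in the definition of $\alpha,\beta\in[0,2\pi)$: the quantity $|\alpha-\beta|$ should be read modulo $2\pi$, or equivalently $d_2$ is really the distance to the circle angle. I will handle this by treating the angle difference as the angle between $\tilde w$ and the transported vector $\tilde w'$, and by noting the degenerate case $\tilde v(0)=\tilde w(0)$ separately, where only the angle between the vectors counts.

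\textbf{Main obstacle.} I expect the main obstacle to be this bookkeeping of the angle term: the identification of $|\alpha-\beta|$ with the angle between $\tilde w$ and the parallel transport of $\tilde v$, including the wrap-around at $2\pi$ and the case $s\to 0$. The uniform estimates themselves follow from compactness, since by homogeneity it suffices to work near $\tilde v$ fixed, together with the explicit formulas of \cite{Beardon}.
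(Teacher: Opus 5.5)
Your proof is correct, but it takes a genuinely different route from the paper's.

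Both arguments use $G$-invariance and a near-optimal choice of $\gamma_t\in\Gamma$ to reduce the theorem to comparing $d_1$ and $d_2$ on $T^1\mathbb H^2$. The difference is in how that comparison is made.

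The paper (Proposition \ref{prop4}) fixes the base vector $\tilde u_0$ and argues qualitatively, by contradiction. It extracts subsequences so that $C_{\tilde u_0,\tilde v_{t_n}}(+\infty)$ and $\tilde v_{t_n}(+\infty)$ converge in $\partial\mathbb H^2$, then passes to the limit in the angles $\alpha_{t_n},\beta_{t_n}$. It uses no trigonometry and no parallel transport, and it gives no rate.

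You instead identify $|\alpha-\beta|$ with the angle between $\tilde w$ and the parallel transport $\tilde w'$ of $\tilde v$. This identification is legitimate: transport along $[\tilde v(0),\tilde w(0)]$ carries $C_{\tilde v,\tilde w}$ to $g_sC_{\tilde v,\tilde w}$ and preserves oriented angles. You then use two explicit bounds:
\begin{itemize}
\item $d(\tilde v(1),\tilde w'(1))\le s\cosh 1$, from Jacobi fields with $J'(0)=0$ integrated along the segment;
\item the identity $\sinh(\delta/2)=\sinh 1\,\sin(\theta/2)$ for two unit segments from a common point.
\end{itemize}

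What your route buys is an explicit, uniform modulus of equivalence, such as $d_1\le(1+\cosh 1)s+2\sinh 1\,\sin(\theta/2)$. This is more than the statement needs. It also makes you face two points the paper passes over silently:
\begin{itemize}
\item $|\alpha-\beta|$ must be read as a distance on the circle. With the literal values in $[0,2\pi)$, the implication $d_1\to0\Rightarrow d_2\to0$ fails: take $\tilde w$ based on $\tilde v$'s geodesic and rotated slightly clockwise, so $\alpha=0$ and $\beta\approx 2\pi$. The paper's limits of $\alpha_{t_n},\beta_{t_n}$ tacitly use the circular reading too.
\item $d_2$ is undefined when the base points coincide.
\end{itemize}

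The paper's argument is shorter and needs only continuity and compactness of $\partial\mathbb H^2$. Yours is more robust and more informative.
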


Before proving this theorem, let us prove the following proposition.

\begin{proposition}\label{prop4}
	Let $\tilde u_{0}\in T^{1}\mathbb H^{2}$ such that
	\[
	\tilde u_{0}(0)=i
	\qquad\text{and}\qquad
	\tilde u_{0}(+\infty)=\infty.
	\]
	Let $(\tilde v_{t})_{t\geq 0}$ be a family of vectors in $T^{1}\mathbb H^{2}$.
	Then
	\[
	\lim\limits_{t\to+\infty} d_{1}(\tilde u_{0},\tilde v_{t})=0
	\qquad\Longleftrightarrow\qquad
	\lim\limits_{t\to+\infty} d_{2}(\tilde u_{0},\tilde v_{t})=0.
	\]
\end{proposition}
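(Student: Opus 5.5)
The plan is to show that each of the two conditions is equivalent to the same third condition: $\tilde v_t\to\tilde u_0$ in the usual topology of $T^1\mathbb H^2$. Concretely, I identify $T^1\mathbb H^2$ with $\mathbb H^2\times\mathbb R/2\pi\mathbb Z$. A vector is recorded by its base point $z\in\mathbb H^2$ and the Euclidean angle $\theta$ it makes with the positive real direction in the upper half-plane model, so that $\tilde u_0=(i,\pi/2)$. Then I check, for $d_1$ and for $d_2$ separately, that
\[
d_j(\tilde u_0,\tilde v_t)\to0 \iff \tilde v_t(0)\to i \ \text{ and } \ \theta(\tilde v_t)\to\pi/2 .
\]
Since the proposition concerns a family indexed by $t$, no uniformity is needed. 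It suffices to use continuity of certain maps together with local inverses of them near $\tilde u_0$.

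For $d_1$, I use the map $\Phi(\tilde v)=(\tilde v(0),\tilde v(1))$, which sends $T^1\mathbb H^2$ onto the set of pairs of points at hyperbolic distance exactly $1$. This map is a homeomorphism onto its image. It is continuous because the geodesic flow is continuous. Its inverse is continuous because the unit vector at $p$ pointing to $q$, for $d(p,q)=1$, depends continuously on $(p,q)$; this is visible from the explicit circle-arc geodesics, or by moving $p$ to $i$ with an isometry that depends continuously on $p$. Since $d_1(\tilde u_0,\tilde v)=d(i,\tilde v(0))+d(ei,\tilde v(1))$, the condition $d_1\to0$ means exactly $\Phi(\tilde v_t)\to\Phi(\tilde u_0)$, which is equivalent to $\tilde v_t\to\tilde u_0$.

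For $d_2$, the term $d(\tilde u_0(0),\tilde v_t(0))$ handles the base points, so the work lies in the angular term. Write $w_t=\tilde v_t(0)\to i$ with $w_t\neq i$, and set $s_t=d(i,w_t)$. The angle $\alpha_t$ is the angle at $i$ from $C_t$ to $\tilde u_0$. The angle $\beta_t$ is the angle at $w_t$ from $g_{s_t}C_t$ to $\tilde v_t$. The key observation is that, as $s_t\to0$, the Euclidean direction of $g_{s_t}C_t$ at $w_t$ differs from the Euclidean direction of $C_t$ at $i$ by an error $\varepsilon_t\to0$. This holds uniformly in the direction of $C_t$, since the geodesics through $i$ have uniformly bounded curvature as Euclidean curves near $i$. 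Angles measured in the hyperbolic metric agree with Euclidean angles, because the metric is conformal. Hence
\[
\alpha_t-\beta_t\equiv \pi/2-\theta(\tilde v_t)+\varepsilon_t \pmod{2\pi},
\]
which shows that the angular term tends to $0$ if and only if $\theta(\tilde v_t)\to\pi/2$. If $w_t=i$ for some values of $t$, the angular term should be read as the plain angle between $\tilde u_0$ and $\tilde v_t$, and the same conclusion holds.

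The main obstacle is the normalization $\alpha,\beta\in[0,2\pi)$. Taken literally, $|\alpha_t-\beta_t|$ can be close to $2\pi$ even when the two vectors are nearly parallel, namely when $\alpha_t$ is near $0$ and $\beta_t$ near $2\pi$. For $d_2$ to be a genuine metric equivalent to the Sasaki distance, this term must be read as the distance on the circle $\mathbb R/2\pi\mathbb Z$, that is, $\min\{|\alpha-\beta|,\,2\pi-|\alpha-\beta|\}$. I will state this interpretation explicitly and use it throughout. With it, the displayed congruence gives both directions. Combining this with the $d_1$ argument, each of $d_1\to0$ and $d_2\to0$ is equivalent to $\tilde v_t\to\tilde u_0$, which proves the proposition.
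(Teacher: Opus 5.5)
Your proof is correct and follows the paper's strategy. Both arguments reduce each condition to $\tilde v_t\to\tilde u_0$ in $T^1\mathbb H^2$, using that the reference vectors $C_t$ at $i$ and $g_{s_t}C_t$ at $\tilde v_t(0)$ become indistinguishable as $s_t\to0$. The paper gets this by contradiction, extracting $\xi_{t_n}\to\xi$ so that both reference vectors converge to the same $\tilde w$; your uniform curvature bound on geodesics through $i$ gives it directly, without subsequences.

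Your reading of $|\alpha-\beta|$ on $\mathbb R/2\pi\mathbb Z$ is genuinely needed. With the literal $[0,2\pi)$ normalization the statement fails: take $\tilde v_t(0)=ie^{1/t}$, so $C_t=\tilde u_0$ and $\alpha_t=0$, and tilt $\tilde v_t$ by $1/t$ in the negative sense, so $\beta_t\to2\pi$ while $\tilde v_t\to\tilde u_0$. The paper's limits $\alpha_{t_n},\beta_{t_n}\to\widehat{(\tilde w,\tilde u_0)}$ silently rely on the same mod-$2\pi$ reading.
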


\begin{proof}

For $t\geq 0$, set
\begin{itemize}
	
	\item$ \alpha_{t}=\alpha_{\tilde u_{0},\tilde v_{t}},
	\qquad
	\beta_{t}=\beta_{\tilde u_{0},\tilde v_{t}},
	$
	\item $\xi_{t}=C_{\tilde u_{0},\tilde v_{t}}(+\infty),
	\qquad
	\xi_{t}'=\tilde v_{t}(+\infty).
	$
\end{itemize}
\medskip

\noindent
$\left( \Longrightarrow \right)$ Suppose $
\lim\limits_{t\to+\infty} d_{1}(\tilde u_{0},\tilde v_{t})=0.
$

Since $
\lim\limits_{t\to+\infty}\tilde v_{t}(0)=i
\text{ and }
\lim\limits_{t\to+\infty}\tilde v_{t}(1)=e\,i,
$
we have $
\lim\limits_{t\to+\infty}\xi_{t}'=\infty.
$

Suppose by contradiction that there exist $\varepsilon>0$ and a sequence $(t_{n})_{n\geq 0}\subset\mathbb R_{+}$
converging to $+\infty$ such that
\begin{equation}\label{eq1}
	\forall n\geq 0,\qquad |\alpha_{t_{n}}-\beta_{t_{n}}|\geq \varepsilon.
\end{equation}

Up to a subsequence, we can suppose $
\lim\limits_{n\to+\infty}\xi_{t_{n}}=\xi.
$

Let $\tilde w\in T^{1}\mathbb H^{2}$ such that
$
\tilde w(0)=i
\text{ and }
\tilde w(+\infty)=\xi,
$

we have
$
\lim\limits_{n\to+\infty} C_{\tilde u_{0},\tilde v_{t_{n}}}=\tilde w,
\lim\limits_{n\to+\infty}g_{s_n}C_{\tilde u_0,\tilde v_{t_n}}=\tilde w, \text{ with } s_n=d(i,\tilde v_{t_n}(0))
\text{ and }
\lim\limits_{n\to+\infty}\tilde v_{t_{n}}=\tilde u_{0}.
$

It follows that
\[
\lim\limits_{n\to+\infty}\alpha_{t_{n}}=\widehat{(\tilde w,\tilde u_{0})}
\qquad\text{and}\qquad
\lim\limits_{n\to+\infty}\beta_{t_{n}}=\widehat{(\tilde w,\tilde u_{0})},
\]
hence
$
\lim\limits_{n\to+\infty}|\alpha_{t_{n}}-\beta_{t_{n}}|=0,
$
which contradicts (\ref{eq1}).

\medskip

\noindent
$\left( \Longleftarrow \right)$ Suppose
$
\lim\limits_{t\to+\infty} d_{2}(\tilde u_{0},\tilde v_{t})=0.
$

Suppose by contradiction that there exist $\varepsilon>0$ and $(t_{n})_{n\geq 0}\subset\mathbb R_{+}$
converging to $+\infty$ such that
\begin{equation}\label{eq2}
	\forall t_{n},\qquad d\bigl(\tilde u_{0}(1),\tilde v_{t_{n}}(1)\bigr)\geq \varepsilon.
\end{equation}

We can suppose
$
\lim\limits_{n\to+\infty}\xi_{t_{n}}=\xi
\text{ and }
\lim\limits_{n\to+\infty}\xi_{t_{n}}'=\xi'.
$

Let $\tilde w\in T^{1}\mathbb H^{2}$ with
$
\tilde w(0)=i
\text{ and }
\tilde w(+\infty)=\xi,
$
and $\tilde w'\in T^{1}\mathbb H^{2}$ with
$
\tilde w'(0)=i
\text{ and } 
\tilde w'(+\infty)=\xi'.
$

Clearly,
$
\lim\limits_{n\to+\infty}\tilde v_{t_{n}}=\tilde w'
\text{ and } 
\lim\limits_{n\to+\infty} C_{\tilde u_{0},\tilde v_{t_{n}}}=\tilde w.
$

It follows that
$
\lim\limits_{n\to+\infty}\alpha_{t_{n}}=\widehat{(\tilde w,\tilde u_{0})}
\text{ and } 
\lim\limits_{n\to+\infty}\beta_{t_{n}}=\widehat{(\tilde w,\tilde w')}.
$ 

Since
$
\lim\limits_{n\to+\infty}|\alpha_{t_{n}}-\beta_{t_{n}}|=0,
$
we obtain
$
\tilde w'=\tilde u_{0}.
$

We deduce that the geodesic segment
$
[\tilde v_{t_{n}}(0),\tilde v_{t_{n}}(1)]
$
converges to the vertical arc
$
[i,e\,i],
$
which contradicts (\ref{eq2}).
\end{proof}

\subsection*{Proof of the Theorem \ref{equiv_stable_sets}}

Let $\tilde u,\tilde v\in T^{1}\mathbb H^{2}$ projecting onto $u,v$ in $T^{1}S$ such that
$v\in W_{d_{i}}^{ss}(u)$.

By definition, there exists $(\gamma_{t})_{t\geq 0}$ in $\Gamma$ such that
\[
\lim\limits_{t\to+\infty} d_{i}\bigl(g_{t}\tilde u,\gamma_{t}g_{t}\tilde v\bigr)=0.
\]

Since $G$ acts on $T^{1}\mathbb H^{2}$ transitively, there exists a sequence of isometries $(f_{t})_{t\geq 0}$ in $G$ such that
\[
g_{t}\tilde u=f_{t}\tilde u_{0}.
\]

Since $G$ acts by isometries on $(T^{1}\mathbb H^{2},d_{i})$, we have
\[
d_{i}\bigl(g_{t}\tilde u,\gamma_{t}g_{t}\tilde v\bigr)
=
d_{i}\bigl(\tilde u_{0},f_{t}^{-1}\gamma_{t}g_{t}\tilde v\bigr).
\]

Applying Proposition \ref{prop4}, we obtain that $v\in W_{d_{j}}^{ss}(u)$, with
$
j\in\{1,2\}.
$
\qed

\end{document}